\documentclass[11pt,twoside]{amsart}

\usepackage[utf8]  {inputenc}%
\usepackage[T1]      {fontenc }%
\usepackage          {amsmath }%
\usepackage          {amsfonts}%
\usepackage          {amssymb }%
\usepackage          {amsthm  }%
\usepackage          {a4wide  }%
\usepackage          {url     }%
\usepackage          {tikz    }%
\usepackage[bookmarks=false,pdfborder={0 0 0.05}]{hyperref}
\usepackage[all]{xy}
\usepackage{lmodern}
\usepackage{tikz-cd}

\usepackage{enumitem}

\usepackage{booktabs}

\usepackage{amsmath, amsfonts, amssymb, amsthm,  wasysym, graphics, graphicx, xcolor, frcursive,comment,bbm}

\definecolor{darkblue}{rgb}{0.0,0,0.7} 
\definecolor{darkred}{rgb}{0.7,0,0} 

\usepackage{hyperref}
\usepackage[all]{xy}
\usepackage[T1]{fontenc}

\usepackage{MnSymbol}

\DeclareMathOperator{\idop}{id}

\newtheorem{theorem}{Theorem}[section]

\newtheorem{proposition}[theorem]{Proposition}
\newtheorem{lemma}[theorem]{Lemma}

\theoremstyle{definition}
\newtheorem{definition}[theorem]{Definition}

\newtheorem{remark}[theorem]{Remark}

\newtheorem*{theoremA}{Theorem A}
\newtheorem*{theoremB}{Theorem B}

\author[P.~Bader]{Philipp Bader}
\address{Philipp Bader, University of Glasgow, United Kingdom}
\email{p.bader.1@research.gla.ac.uk}

\author[B.~Böhmler]{Bernhard Böhmler}
\address{Bernhard Böhmler, Leibniz Universität Hannover, Germany}
\email{boehmler@math.uni-hannover.de}

\author[P.~Wegener]{Patrick Wegener}
\address{Patrick Wegener, Philipps-Universit\"at Marburg, Germany}
\email{wegenerp@staff.uni-marburg.de}

\title[commutators of signed n-cycles]{commutators of signed n-cycles}

\date{\today}

\subjclass[2020]{Primary 20F55; Secondary 20F12, 20B30}

\keywords{Coxeter elements; commutators; $n$-cycles; conjugacy classes; symmetric groups; signed symmetric groups}

\begin{document}
\begin{abstract}
    We show that for $n \geq 6$ each element of the commutator subgroup in the symmetric group~$\mathfrak{S}_n$ resp. in the signed symmetric group $(\mathbb{Z}/2\mathbb{Z})^n\rtimes\mathfrak{S}_n$ is the commutator of two $n$-cycles resp. the commutator of two $n$-cycles with a negative sign product; with one exception. If $n \equiv 2$ $\mathrm{mod}~4$, the element $-\mathrm{id}$ of $(\mathbb{Z}/2\mathbb{Z})^n\rtimes\mathfrak{S}_n$ is not such a commutator. In the language of Coxeter groups, this yields a description of commutators of Coxeter elements in Type $A$ and $B$.  
\end{abstract}

\maketitle

\section{Introduction}
The commutator subgroup of a group is defined as the subgroup generated by all commutators. A product of two commutators need not be a commutator itself. Hence, it makes sense to ask whether the commutator subgroup of a given group consists entirely of commutators.

In 1951, Ore proved that the above holds for the finite symmetric groups by showing that every element of the alternating group can be written as a commutator in the symmetric group \cite{Ore}. Moreover, Ore proceeded to show that for $n \geq 5$, every element in the alternating group $\mathfrak{A}_n$ is already a commutator in $\mathfrak{A}_n$. This led to the so-called \textit{Ore conjecture} which asserts that every element in a finite, non-abelian, simple group is a commutator. Over the years the conjecture was established for more simple groups and finally completely proven in~2010 by Liebeck, O’Brien, Shalev and Tiep \cite{Liebeck}.

We show the following results.
\begin{theoremA}[Theorem~\ref{thm:type-A}]
Let $n \geq 6$ and let $C(n)$ be the conjugacy class of $n$-cycles in the symmetric group $\mathfrak{S}_n$. Then the map
$$C(n) \times C(n) \to \mathfrak{A}_n,\quad (\tau, \pi) \mapsto [\tau, \pi] = \tau^{-1} \pi^{-1} \tau \pi$$
is surjective. 
\end{theoremA}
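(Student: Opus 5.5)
Since $[\tau,\pi]=\tau^{-1}\cdot(\pi^{-1}\tau\pi)$ and $\sigma^{-1}[\tau,\pi]\sigma=[\sigma^{-1}\tau\sigma,\sigma^{-1}\pi\sigma]$, the image of the map is a union of conjugacy classes of $\mathfrak{S}_n$. The image lies in $\mathfrak{A}_n$ because every commutator is even. A conjugacy class of $\mathfrak{S}_n$ is determined by its cycle type, so it suffices to show the following: for every partition $\lambda$ of $n$ with an even number of even parts, some permutation of cycle type $\lambda$ equals $[\tau,\pi]$ with $\tau,\pi$ both $n$-cycles. I would prove this constructively rather than through character theory.

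A character argument would go as follows. By Murnaghan--Nakayama, the irreducible characters take nonzero values on $n$-cycles only for hook partitions. So the number of ways to write $g$ as a product of two $n$-cycles is governed by a short sum over the $n$ hooks, and this easily shows that every even $g$ is a product $\alpha\beta$ of two $n$-cycles. However, writing $g=[\tau,\pi]$ needs more: with $\alpha=\tau^{-1}$, the conjugator $\pi$ sending $\tau$ to $\beta$ is only determined up to the coset $\langle\tau\rangle\pi_0$, and one of these $n$ elements must itself be an $n$-cycle. Controlling this condition through characters seems awkward, so I would not rely on it.

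The constructive plan has three steps.
\begin{enumerate}
\item Fix $\tau=(1\,2\,\cdots\,n)$ and search for $n$-cycles $\pi$ such that $\tau^{-1}\pi^{-1}\tau\pi$ has prescribed cycle type. Equivalently, find an $n$-cycle $\pi$ for which $\pi^{-1}\tau\pi=\tau g$. This is a relabelling of the cycle $\tau$ by $\pi$, so $\pi$ can be written down as an explicit ``shuffle'' of $1,\dots,n$.
\item Build a library of explicit solutions for small cycle types: a single odd cycle $(k)$ together with fixed points, two cycles of even length, the identity (take $\pi=\tau$), and small mixed cases.
\item Prove a gluing lemma. Suppose $g_1\in\mathfrak{S}_a$ is realised by $a$-cycles $(\tau_1,\pi_1)$ and $g_2\in\mathfrak{S}_b$ (on $b$ new points) by $b$-cycles $(\tau_2,\pi_2)$. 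Then $g_1g_2$, possibly after adjusting a fixed point or merging with a specified block, should be realised by $(a+b)$-cycles obtained by splicing: $\tau=\tau_1\tau_2\cdot t$ and $\pi=\pi_1\pi_2\cdot t'$ for suitable transpositions $t,t'$ joining the two cycles. To keep the commutator computation local, I would require the building blocks to have a normalised form, for example $\pi_i$ maps a distinguished point to the $\tau_i$-successor of another distinguished point.
\end{enumerate}
Given these steps, an arbitrary even cycle type decomposes into odd cycles and pairs of even cycles, and an induction on the number of blocks covers every $\lambda$ once $n$ is large enough.

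The main obstacle is the gluing lemma. A commutator of spliced cycles usually differs from the product of the individual commutators near the splice points, so the extra transpositions have to be chosen so that these correction terms cancel. If they cannot be made to cancel exactly, they should at least change the cycle type in a controlled way, for instance by merging two cycles, that the induction can absorb. I would first try to realise each building block with a fixed point available at the splice and use that point to absorb the correction.

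The small values $n=6,7,8$, together with the base blocks, I would check directly, by hand or by computer. This is also where the hypothesis $n\geq 6$ enters. For $n\le5$ some classes fail, for example the cycle type $(2,2)$ with $n=4$, which explains why the theorem starts at $n=6$.
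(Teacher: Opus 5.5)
Your plan has the same architecture as the paper: fix $\tau=\sigma_n$, reduce to cycle types by simultaneous conjugation, realise building blocks, splice, and induct. As written, though, it is an outline in which both load-bearing steps are missing, and the decomposition you propose would fail.

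\textbf{The gluing lemma.} You leave it open and even expect correction terms that must be absorbed by ``merging'' cycles; that is not an argument. The missing idea is the normalisation $\pi(1)=\sigma(1)=2$, under which the splice is exact. Take $\pi_1=(1\ 2\ i_3\ \ldots\ i_{n_1})$ and $\pi_2=(1\ 2\ j_3\ \ldots\ j_{n_2})$, and set $\pi=(1\ 2\ i_3\ \ldots\ i_{n_1}\ \ n_1{+}1\ \ n_1{+}2\ \ n_1{+}j_3\ \ldots\ n_1{+}j_{n_2})$. Then $[\sigma_n,\pi]=[\sigma_{n_1},\pi_1]\sqcup([\sigma_{n_2},\pi_2]+n_1)$. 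Appending $n+1$ to the cycle word of $\pi$ just adds a fixed point (Lemmas~\ref{lem:Bader1} and~\ref{lem:Bader2}). This is exactly your splicing with $t=(n_1\ n)$ and $t'=(i_{n_1}\ \ n_1{+}j_{n_2})$, but you have to verify that nothing leaks across the splice, and you did not.

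\textbf{The library of blocks.} This is the more serious gap. Gluing only produces disjoint unions, so every indecomposable cycle type must be realised directly, and these come in infinite families: a single odd cycle $C(k)$ for every odd $k\geq 5$, and $C(2k,2l)$ for all $k,l$. Checking $n=6,7,8$ by hand or computer cannot supply these. You need uniform explicit $n$-cycles with a general verification of the commutator, e.g.\ $\pi=(1\ 2\ 4\ 5\ 7\ 9\ \cdots\ n\ \ n{-}1\ \ n{-}3\ \cdots\ 6\ 3)$ for $C(n)$. This is the bulk of the paper's work (Appendix~\ref{sec:appA}).

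\textbf{Small parts.} Your decomposition ``into odd cycles and pairs of even cycles'' breaks down at parts $3$ and $2$. The classes $C(3)$, $C(2,2)$ and $C(2,2,1)$ are not commutators of two $3$-, $4$-, resp.\ $5$-cycles at all. So in cycle types such as $(3,3)$, $(5,3)$, $(4,2,3)$, $(5,2,2)$, $(3,2,2)$, $(2,2,2,2)$ or $(2,2,1,1)$, the $3$'s and pairs of $2$'s cannot stand as blocks. The obstruction is about small parts, not small $n$: it persists for $(3,3,\ldots,3)$ or $(2,\ldots,2)$ with $n$ arbitrarily large. The paper resolves it with additional blocks:
\begin{itemize}
\item the families $C(n,3)$ and $C(n,2,2)$ for $n$ odd, $C(2k,2l,3)$, and $C(2k,2,2,2)$;
\item the sporadic classes $C(3,3)$, $C(3,3,3)$, $C(2,2,2,2)$, $C(2,2,2,2,2,2)$, $C(3,2,2)$, $C(3,3,2,2)$, $C(3,2,2,2,2)$ and $C(2,2,1,1)$.
\end{itemize}
Only with this list, combined with the exact gluing lemmas, does the case analysis on whether $\lambda$ contains $1$, $2$ or $3$ close up.
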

Note that this result can be derived from the work of Vavpeti\v{c} \cite{vavpetic}. Nevertheless, we will provide a proof here since our proof strategy will be essential for the proof of our second result:

\begin{theoremB}[Theorem~\ref{thm:type-B}]
Let $n \geq 6$ and let $C(n)^-$ be the conjugacy class of $n$-cycles with a negative sign product in the signed symmetric group $W_{B_n} :=(\mathbb{Z}/2\mathbb{Z})^n\rtimes\mathfrak{S}_n$. Then the map
\begin{align*}
C(n)^- \times C(n)^- \to & \begin{cases}
   \left[W_{B_n}, W_{B_n} \right]\setminus\{-\mathrm{id}\}, & n\equiv 2 \pmod 4\\
   \left[W_{B_n}, W_{B_n} \right], & \text{else}\\
\end{cases} \\
(\tau, \pi) \mapsto & [\tau, \pi]
\end{align*}
is surjective. 
\end{theoremB}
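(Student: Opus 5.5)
The statement concerns the last displayed result, Theorem~B, so the plan is for that; Theorem~A is used along the way. We use the standard reduction that a commutator depends only on conjugacy data: since
\[
[\tau,\pi]=\tau^{-1}(\pi^{-1}\tau\pi),
\]
an element $g$ equals $[\tau,\pi]$ with $\tau,\pi\in C(n)^-$ if and only if $\tau g\in C(n)^-$. Indeed, $\pi^{-1}\tau\pi$ then runs over the whole class $C(n)^-$. The map is conjugation-equivariant, so it suffices to treat one representative $g$ of each conjugacy class of $[W_{B_n},W_{B_n}]$ and to find a single $\tau\in C(n)^-$ with $\tau g\in C(n)^-$.

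First we describe the target. In $W_{B_n}$ the commutator subgroup consists of the signed permutations whose underlying permutation is even and whose sign product is $+1$; this is the kernel of the two linear characters $\mathrm{sgn}\circ p$ and the total sign. Conjugacy classes are given by signed cycle types: pairs of partitions recording positive and negative cycles. The condition on $g$ is then that the underlying cycle type is even and that the number of negative cycles is even. Also, $\tau g\in C(n)^-$ means that the underlying permutation of $\tau g$ is an $n$-cycle, and its sign product is automatically negative, since $g$ has positive sign product.

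The construction proceeds as follows. By Theorem~A, or rather its proof, for the underlying permutation $\bar g\in\mathfrak A_n$ we get an $n$-cycle $\bar\tau$ with $\bar\tau\bar g$ an $n$-cycle. We lift $\bar\tau$ to $\tau$ by choosing signs with total product $-1$. Then $\tau g$ has underlying permutation the $n$-cycle $\bar\tau\bar g$, and its sign product is $(-1)\cdot(+1)=-1$, so $\tau g\in C(n)^-$. This looks like it proves everything, which cannot be right given the exception. The subtlety is that $C(n)^-$ should be read as the class of the Coxeter element. An $n$-cycle with negative sign product is a single negative $n$-cycle, and all such elements are conjugate, so in fact the argument does work whenever the underlying permutation $\bar g$ is in the image of Theorem~A. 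That settles every $g$ with $\bar g$ having $n\ge 6$, which is all of them, except that $-\mathrm{id}$ has $\bar g=\mathrm{id}$. Theorem~A does give $\mathrm{id}=[\tau,\tau]$, so the lifted argument produces $\tau(-\mathrm{id})=-\tau$. Its underlying permutation is an $n$-cycle, but its sign product is $(-1)^n\cdot(-1)$, not $-1\cdot(+1)$: the product of signs of $-\mathrm{id}$ is $(-1)^n$. So $-\mathrm{id}$ lies in the commutator subgroup only when $n$ is even, and then $-\tau$ is a negative $n$-cycle. Hence $-\mathrm{id}$ is always attained when $n$ is even and $-\mathrm{id}\in[W,W]$. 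This contradicts the stated exception, so I must be misreading the definition of $[\tau,\pi]$ or of the class.

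The main obstacle is therefore pinning down the right bookkeeping. I suspect the product of the sign vector with the permutation action must be tracked as $\tau g=(\varepsilon_\tau+\bar\tau\cdot\varepsilon_g,\bar\tau\bar g)$, and a cycle is "negative" when the product of the signs along that cycle is $-1$. On that reading my computation stands, and $-\mathrm{id}\in C(n)^-\cdot C(n)^-$ requires $-\tau\in C(n)^-$. That holds exactly when $n$ is odd, but $-\mathrm{id}\notin[W,W]$ for $n$ odd. So the exception must come from an extra invariant I have not yet identified, and a character-theoretic count of $[\tau,\pi]=-\mathrm{id}$, namely solving $\pi^{-1}\tau\pi=-\tau$, would detect it. The equation $\pi^{-1}\tau\pi=-\tau$ means $\pi$ conjugates $\tau$ to $-\tau=\tau^{n+1}$ in the cyclic group $\langle\tau\rangle$ of order $2n$. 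So I would need $\pi$ in the normalizer of $\langle\tau\rangle$, acting as the power map $k=n+1$, and lying in $C(n)^-$. I would compute that normalizer explicitly: it is of order $2n\cdot\varphi(2n)$, and each power-map element is represented by a signed permutation of the form $i\mapsto ki$. The final step is to decide when such an element, times centralizer elements $\tau^j$, can be a negative $n$-cycle; the parity computation should fail exactly for $n\equiv2\pmod 4$. This normalizer analysis is the step I would expect to be hardest.
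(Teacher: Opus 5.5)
Your reduction in the first paragraph is false, and the theorem's content lies exactly there. From $[\tau,\pi]=\tau^{-1}(\pi^{-1}\tau\pi)$ you only get that $g$ is such a commutator iff $\tau g=\pi^{-1}\tau\pi$ for some $\tau,\pi$ that are \emph{both} in $C(n)^-$. As $\pi$ runs over $C(n)^-$ alone, the conjugates $\pi^{-1}\tau\pi$ need not exhaust the class. Knowing $\tau g\in C(n)^-$ gives some conjugator $x\in W_{B_n}$, and all conjugators form the coset $\langle\tau\rangle x$ of the centralizer, which is cyclic of order $2n$. You must show this coset contains a negative $n$-cycle, and for even $n$ it may not.

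Concretely, let $n$ be even, $\tau$ any lift of $\sigma_n$ with odd sign sum, and $g=(e_1+e_2,\idop)$. Then $\tau g\in C(n)^-$. But any $\pi\in C(n)^-$ with $\pi^{-1}\tau\pi=\tau g$ has underlying permutation $\sigma_n^k$ with $k$ odd. The computation in the proof of Lemma~\ref{lem:Vectors_Attainable_By_Id} then shows that the sign vector of $[\tau,\pi]$ lies in $U_{\sigma_n}$, which does not contain $e_1+e_2$. Your ``contradiction'' at $-\idop$ is the same phenomenon, not a misreading of the definitions. Also, $-\tau\in C(n)^-$ holds exactly for $n$ even, as you first computed, not for $n$ odd.

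For odd $n$ the argument is easily repaired, and it then gives a shorter route than Proposition~\ref{prop:BaakeYieldsVevenFornodd}. Since $\tau^n=-\idop$ centralizes $\tau$, both $x$ and $-x$ are conjugators with the same underlying permutation, which can be taken to be the $n$-cycle $\bar\pi$ from Theorem~A. Their sign sums differ by $n\equiv 1$, so one of them lies in $C(n)^-$.

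For even $n$, $-x$ has the same sign parity, and this is where the paper does its work. For fixed underlying $n$-cycles, the reachable sign vectors are, up to a coordinate permutation, a translate of $U_\tau+U_\pi$. This is all of $\mathbb{F}_{2,\mathrm{even}}^n$ exactly when the balanced bipartitions differ. So for $\rho\neq\idop$ you need Theorem~A in the strengthened form with distinct bipartitions: the parity-unstable cycle in Proposition~\ref{prop:type-A}, giving Theorem~\ref{thm:distinct-bipartition}. For $\rho=\idop$, commuting $n$-cycles force equal bipartitions, so you must vary the cycle. There you need that $\bigcup_\pi U_\pi$ contains every even vector, except the vector $\mathbf{1}$ when $n\equiv 2\pmod 4$ (Lemma~\ref{lem:union-UP}).

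Your normalizer plan addresses only $g=-\idop$. It can be completed: identify $\tau$ with $t\mapsto t+1$ on $\mathbb{Z}/2n$. The candidates are then $t\mapsto (n+1)t+c$, whose square is translation by $(n+2)c$. A single $2n$-cycle would need $\gcd((n+2)c,2n)=2$, which is impossible when $4\mid n+2$. Even so, every other element for even $n$ is left unproved.
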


Note that, in the language of Coxeter groups, both theorems describe precisely the set of commutators $[c,d]$, where $c$ and $d$ are Coxeter elements in a Coxeter group of type $A$ or $B$.

\medskip
\textbf{Outline of the paper.} In Section~\ref{sec:notation} we introduce the notation which we will use throughout the paper. In Section~\ref{sec:A} we prove Theorem A. In Section~\ref{section:type-B} we introduce the signed permutation groups and collect some results about these groups. In Section~\ref{sec:5} we introduce balanced bipartitions and investigate which vectors in $\mathbb{F}_2^n$ can be written as sums of permuted vectors. Finally in Section~\ref{sec:ProofThmB} we prove Theorem B.

\medskip
\textbf{Disclosure of AI tools.} Generative AI tools, namely ChatGPT 5.6, were used to prove Lemma~\ref{lem:ImageOfMpi} and for consistency checks. All AI-generated suggestions were independently reviewed and verified by the authors, who assume full responsibility for the mathematical arguments, results, and final content of the paper.

\section{Notation and convention} \label{sec:notation}
\indent Let $n$ be a positive integer. We denote the symmetric group acting on $n$ letters by $\mathfrak{S}_n$ and the alternating group acting on $n$ letters by $\mathfrak{A}_n$.\\
\indent Every permutation $\pi \in \mathfrak{S}_n$ may be uniquely expressed (up to the order of factors) as a product $\pi = \tau_1 \dots \tau_r$ of disjoint cycles. Let $\tau_1, \ldots, \tau_r$ be arranged in increasing order of cycle length and let $\tau_i$ be a cycle of length $k_i$. We refer to $(k_1,\ldots, k_r)$, considered up to permutation of its entries, as the \textbf{cycle type} of $\pi$, and denote the corresponding conjugacy class in $\mathfrak{S}_n$ by~$C(k_1,\ldots,k_r)$. Here, the order of the $k_i$ will always be chosen for convenience.\\
\indent If $x,y$ are elements of a group, we write
$$[x,y] = x^{-1}y^{-1}xy$$
for the commutator of $x$ and $y$. Accordingly, we read permutations from right to left:\linebreak if~$\pi,\tau\in\mathfrak{S}_n$ then $ (\pi\tau)(i)=\pi(\tau(i))$ for $1\leq i\leq n$.

\section{The Symmetric Group} \label{sec:A}

In this section, we consider the Coxeter group of type $A_n$, which is isomorphic to the symmetric group $\mathfrak{S}_{n+1}$. 
The Coxeter elements of $W_{A_n} \cong \mathfrak{S}_{n+1}$ are exactly the elements of the conjugacy class $C(n+1)$. Our main result specifies which elements arise as the commutator of two Coxeter elements.

We note that the result presented in this section largely follows from the work of Vavpeti{\v{c}} \cite{vavpetic}. The proof we present is conceptually no different from that in \cite{vavpetic}. The reason we include it here is that in our proof, we construct permutations more explicitly, which will be useful in Section \ref{sec:ProofThmB} when dealing with signed permutations. We make explicit citations to the article \cite{vavpetic} throughout and highlight the analogy of the results. 

\begin{theorem}\label{thm:type-A}
    Let $n \geq 6$. The map
    $$C(n) \times C(n) \to \mathfrak{A}_n, (\tau, \pi) \mapsto [\tau, \pi]$$
    is surjective. 
\end{theorem}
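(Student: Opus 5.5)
The first step is to reduce the problem to a statement about products of conjugacy classes. For $\tau,\pi\in C(n)$ we have $[\tau,\pi]=\tau^{-1}\cdot(\pi^{-1}\tau\pi)$, and $\pi^{-1}\tau\pi$ is again an $n$-cycle. Conversely, suppose $\sigma'$ is an $n$-cycle. Then $\sigma'=\pi^{-1}\tau\pi$ for some $\pi$, and every $\pi'$ with the same property lies in the coset $C_{\mathfrak S_n}(\tau)\,\pi$. The centralizer $C_{\mathfrak S_n}(\tau)=\langle\tau\rangle$ consists of powers of $\tau$, and among these the powers $\tau^k$ with $\gcd(k,n)=1$ are $n$-cycles. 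So we may pick $\pi'=\tau^k\pi$; this is an $n$-cycle provided that at least one $\tau^k\pi$ in the coset is an $n$-cycle. To avoid this subtlety entirely, I will organise the argument as follows. Given $g\in\mathfrak A_n$, I will produce an explicit $n$-cycle $\tau$ and an explicit $n$-cycle $\pi$ with $\tau^{-1}\pi^{-1}\tau\pi=g$. Equivalently, writing $g=\tau^{-1}\sigma$, I need an $n$-cycle $\sigma=\tau g$ that is conjugate to $\tau$ by an $n$-cycle.

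The main tool will be the classical fact, due to Ore and others and also contained in Vavpeti\v{c}'s work \cite{vavpetic}, that every even permutation $g$ is a product of two $n$-cycles: $g=\alpha\beta$ with $\alpha,\beta\in C(n)$. Taking $\tau=\alpha^{-1}$ and $\sigma=\beta$, I need $\pi\in C(n)$ with $\pi^{-1}\alpha^{-1}\pi=\beta$. Since $\alpha^{-1}$ and $\beta$ are both $n$-cycles, the set of conjugating elements is a coset $\langle\alpha\rangle\pi_0$ of size $n$. Hence the heart of the proof is to choose the factorisation $g=\alpha\beta$ so that this coset contains an $n$-cycle.

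I plan to do this constructively, cycle type by cycle type. For each conjugacy class of $\mathfrak A_n$, that is, each partition of $n$ with an even number of even parts, I will write down explicit $\tau$ and $\pi$. I will build these from a small number of gadgets. For instance, the conjugation $\pi^{-1}\tau\pi$ by a suitable $n$-cycle $\pi$ that shifts $\tau=(1\,2\,\dots\,n)$ along blocks produces a commutator that acts as a prescribed cycle on each block. I then glue blocks for the various cycles of $g$, handling separately the fixed points and the pairs of even-length cycles. Induction on $n$ should make this manageable: given $n$-cycles $\tau,\pi$ realising $g$ on $\{1,\dots,m\}$, insert new points into both cycles in consecutive positions so that the commutator acquires an extra cycle of prescribed length, or extra fixed points. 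The base cases $6\le n\le 9$, say, I will verify directly or by computer. These small cases are exactly where the hypothesis $n\ge 6$ enters, since for $n=5$ some elements of $\mathfrak A_5$ fail.

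I expect the main obstacle to be the insertion step. Adding points to the two cycles changes the commutator in a nonlocal way, so one must check carefully that only the intended cycle changes. This check also has to cover the awkward classes: the identity, which is trivial since $[\tau,\tau]=\mathrm{id}$; $n$-cycles for odd $n$; and elements with two long even cycles. My first attempt will be to insert the new letters immediately after a fixed letter $a$ in $\tau$ and immediately after $\pi^{-1}(a)$-type positions in $\pi$. I will compute the effect of the insertion on $\tau^{-1}\pi^{-1}\tau\pi$ locally, and I will fall back on case analysis by cycle type wherever the local computation does not close up.
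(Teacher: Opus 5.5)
Your proposal is a plan rather than a proof. The two steps that carry all the content are announced but never carried out.

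\textbf{The reductions and the insertion step.} The opening reductions are circular. The coset remark only says that $\tau^k\pi$ is an $n$-cycle provided some element of the coset already is one. Factoring $g=\alpha\beta$ into two $n$-cycles moves the whole difficulty into finding an $n$-cycle that conjugates $\alpha^{-1}$ to $\beta$, which is the theorem itself.

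In the constructive part, the insertion step you flag as the main obstacle is exactly what the paper settles, using a normalisation you are missing. Fix $\tau=\sigma_n=(1\,2\,\dots\,n)$ and require $\pi(1)=2$. Take normalised cycles $\pi_1=(1\,2\,i_3\,\dots\,i_{n_1})$ and $\pi_2=(1\,2\,j_3\,\dots\,j_{n_2})$, and form the concatenation $\pi=(1\,2\,i_3\,\dots\,i_{n_1}\;\,n_1{+}1\;\,n_1{+}2\;\,n_1{+}j_3\,\dots\,n_1{+}j_{n_2})$. It satisfies $[\sigma_n,\pi]=[\sigma_{n_1},\pi_1]\sqcup\bigl([\sigma_{n_2},\pi_2]+n_1\bigr)$ exactly. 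Likewise, appending $n{+}1$ at the end of $\pi$ adds one fixed point (Lemmas~\ref{lem:Bader1} and~\ref{lem:Bader2}). All boundary effects cancel because $\pi$ agrees with $\sigma_n$ at the first letter of each block. Without a lemma of this kind, ``glue blocks'' has no content.

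\textbf{Why finitely many base cases cannot suffice.} Even a perfect gluing step would not rescue your plan of checking $6\le n\le 9$ and then adding new cycles or fixed points. Gluing produces a disjoint union of commutators on the blocks, so each block carries an even permutation of itself. Hence a single $m$-cycle with $m$ odd, or an element of $C(2k,2l)$, never splits into smaller realisable pieces. For instance, $(1\,2\,\dots\,11)\in\mathfrak A_{11}$ and the elements of $C(6,6)$ are out of reach. Each such class needs its own construction, which is why the paper gives explicit infinite families in Appendix~\ref{sec:appA}.

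There is a second problem. $C(3)$, $C(2,2)$ and $C(2,2,1)$ are not commutators of $3$-, $4$-, respectively $5$-cycles at all. So every $3$-cycle and every pair of transpositions must be absorbed into a larger block. The paper does this with $C(m,3)$ (including $C(1,3)$), $C(2k,2l,3)$, $C(m,2,2)$, $C(2k,2,2,2)$, $C(3,3)$, $C(3,3,3)$, $C(2,2,2,2)$, $C(2,2,1,1)$ and related classes. Your plan does not anticipate this; for example, $C(3,1,1,1)\subset\mathfrak A_6$ cannot be built from a $3$-point block plus fixed points.

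To close the gap you need the normalised gluing lemma above, together with one of two things. Either supply explicit families for the irreducible classes, or prove an insertion step that lengthens an existing cycle of the commutator by $2$. In both cases you must also treat the unrealisable small blocks.
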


In Coxeter language, the theorem states that for $n \geq 5$, every element in the commutator subgroup of $W_{A_n}$ can be expressed as the commutator of two Coxeter elements. 

\begin{remark}
    For $W_{A_1}$ and $W_{A_2}$, only the identity element can be expressed as a commutator of Coxeter elements. For $W_{A_3}$ and $W_{A_4}$, both the identity and all elements in the conjugacy classes $C(3,1)$, $C(3,1,1)$, and $C(5)$, respectively, can be expressed as commutators of Coxeter elements. 
\end{remark}

The rest of this section is devoted to the proof of the theorem. For $n \in \mathbb{N}$, let 
$$\sigma_n := (1 \ 2 \ \ldots \ n) \in C(n).$$ 

\begin{definition}
    We call a normalised $n$-cycle $(1\ 2\ i_3 \ \ldots \ i_n)$ \textbf{parity stable} if $i_j \equiv j \pmod 2$ for $3 \leq j \leq n$ and \textbf{parity unstable} otherwise. If a normalised $n$-cycle $(1, 2, i_3 \ldots, i_n)$ is parity unstable, we call every $k\in \{3,\ldots, n\}$ with $i_k \not\equiv k \pmod 2$ a \textbf{parity defect} of the cycle.
\end{definition}

Note that $\sigma_n$ is parity stable. We start by showing that it suffices to prove the following proposition.

\begin{proposition}\label{prop:type-A}
    For every $n \geqslant 6$ and every non-trivial even conjugacy class $C(k_1, \ldots , k_m)$ in $\mathfrak{S}_n$, there exists a parity unstable $\pi \in C(n)$ with $[\sigma_n, \pi] \in C(k_1, \ldots , k_m).$
\end{proposition}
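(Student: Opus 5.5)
We plan to prove Proposition~\ref{prop:type-A} by induction on $n$, with explicit constructions that extend a cycle for $n$ to one for $n+1$ or $n+2$. The key observation is that $[\sigma_n,\pi] = \sigma_n^{-1}(\pi^{-1}\sigma_n\pi)$. Hence we need an $n$-cycle $\sigma_n^\pi := \pi^{-1}\sigma_n\pi$, i.e.\ the cycle $(\pi^{-1}(1)\ \pi^{-1}(2)\ \ldots\ \pi^{-1}(n))$, such that $\sigma_n^{-1}\sigma_n^\pi$ has the prescribed cycle type. Since every $n$-cycle is a conjugate of $\sigma_n$, the task reduces to two steps. First, find an $n$-cycle $\rho$ with $\sigma_n^{-1}\rho \in C(k_1,\ldots,k_m)$. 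Second, check that a conjugating $\pi$ with $\pi^{-1}\sigma_n\pi=\rho$ can be chosen parity unstable. For the second step we have freedom: $\pi$ may be replaced by $\sigma_n^j\pi$ (and by composing with the centralizer of $\rho$, i.e.\ powers of $\rho$). We expect that among these $n^2$ choices, normalised so that the cycle starts $(1\ 2\ \ldots)$, at least one has a parity defect as soon as $n\ge 6$. We will verify this with a direct argument: a parity stable $n$-cycle must send odd positions to odd values, and only few $n$-cycles are compatible with this constraint under all rotations.

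For the first step we use the classical translation into factorisations. An $n$-cycle $\rho$ with $\sigma_n^{-1}\rho = \alpha$ means $\rho = \sigma_n\alpha$. So we must show that for each even cycle type $\lambda \neq (1^n)$ there is $\alpha$ of type $\lambda$ with $\sigma_n\alpha$ an $n$-cycle. This is the setting of Vavpeti\v{c} \cite{vavpetic}. We plan an inductive construction. For the base cases $n=6,7$ we list explicit $\rho$ for every even class; these are finite checks. For the inductive step we use the following device. If $\alpha$ with $\sigma_n\alpha$ an $n$-cycle is given, then appending a new letter into a cycle of $\alpha$ changes the cycle lengths in a controlled way, and appending two letters forming a $2$-cycle or enlarging one cycle by two keeps the parity even. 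Concretely, inserting the letters $n+1,n+2$ next to $n$ inside $\alpha$ turns $\sigma_n\alpha$ into an $(n+2)$-cycle relative to $\sigma_{n+2}$. We split into cases by the cycle type. If $\lambda$ has a part $k\ge3$, we reduce to $k-2$. If $\lambda$ has two parts equal to $2$, we remove both. If $\lambda$ contains fixed points, we pass from $n$ to $n-1$ or $n-2$ by deleting fixed points (one fixed point when $\lambda$ has one, which preserves evenness). We must ensure that the reduced type is nontrivial and that the reduced $n$ is still at least $6$; the small leftover types are handled among the base cases.

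The main obstacle we expect is guaranteeing parity instability uniformly through the induction, since the insertion step may happen to produce a parity stable normalised cycle. The first remedy we would try is to exploit the rotational freedom $\pi\mapsto\sigma_n^j\pi$. This shifts the normalised representation: if $(1\ 2\ i_3\ldots i_n)$ is parity stable, then the cycle corresponding to a shifted conjugator starts at a different pair of consecutive values, and for $n$ odd the parity pattern necessarily breaks on wrapping around. For $n$ even we would additionally use the centralizer of $\rho$ to change the starting point. If this fails in some case, we would instead strengthen the induction hypothesis by carrying along a parity defect at a position away from $n$, so that the insertion of $n+1,n+2$ does not destroy it.
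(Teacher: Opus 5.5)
\textbf{There is a genuine gap: your reduction drops the defining constraint of the proposition.}

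For every $\pi\in\mathfrak{S}_n$ the element $\rho=\pi^{-1}\sigma_n\pi$ is automatically an $n$-cycle. So your first step only asks that the target be a product $\sigma_n^{-1}\rho$ of two $n$-cycles. The actual requirement is that the conjugator $\pi$ itself be an $n$-cycle, and normalised with $\pi(1)=2$, which is what makes ``parity unstable'' meaningful. This is never imposed or verified; you only treat $\pi$ as a cycle implicitly when you speak of normalising it.

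The second step cannot recover it. The $\pi$ with $\pi^{-1}\sigma_n\pi=\rho$ form a single coset $\langle\sigma_n\rangle\pi_0$ of size $n$, not $n^2$, since $\pi_0\rho^i=\sigma_n^i\pi_0$. Exactly one member of this coset has $\pi(1)=2$. So for a given $\rho$ there is no freedom left, and nothing forces that member to be an $n$-cycle.

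This is a real obstruction, not a technicality. In $\mathfrak{S}_4$, $\rho=\sigma_4^{-1}$ is the only $4$-cycle with $\sigma_4^{-1}\rho\in C(2,2)$. Its conjugators are the reflections $(1\ 3)$, $(2\ 4)$, $(1\ 2)(3\ 4)$, $(1\ 4)(2\ 3)$, none of which is a $4$-cycle. This is consistent with $C(2,2)$ not being a commutator of two $4$-cycles. Likewise, $C(2,2,1)$ consists of products of two $5$-cycles but contains no commutator of two $5$-cycles. An induction that only produces suitable $\rho$ (equivalently, $\alpha$ with $\sigma_n\alpha\in C(n)$) therefore proves a statement about products of two $n$-cycles, not the proposition. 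For the same reason, your rotation argument for a parity defect has nothing to act on.

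\textbf{What is missing} is a construction of $\pi$ itself as a normalised $n$-cycle $(1\ 2\ i_3\ \ldots\ i_n)$, using operations that preserve this form while controlling $[\sigma_n,\pi]$. The paper uses two such operations:
\begin{itemize}
\item appending $n+1$ at the end of the cycle, which adds a fixed point to the commutator (Lemma~\ref{lem:Bader1});
\item concatenating two normalised cycles with a shift, which takes the disjoint union of the two commutators (Lemma~\ref{lem:Bader2}).
\end{itemize}
These are seeded with explicit parity unstable cycles for the irreducible classes and a few extra classes. Parity defects then propagate by Remark~\ref{rem:ParityDefectStability}.

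Independently of the main gap, your reduction on cycle types also has a hole. The all-$2$ types $(2,2,2,2)$, $(2,2,2,2,2,2),\ldots$ (and these with one added fixed point) reduce only down to $C(2,2)$ in $\mathfrak{S}_4$ or $C(2,2,1)$ in $\mathfrak{S}_5$. Neither has a solution, and both lie below your base cases $n=6,7$. So $C(2,2,2,2)$ needs its own explicit seed, as the paper provides.
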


\begin{proof}[Proof of Theorem \ref{thm:type-A} given Proposition \ref{prop:type-A}]
    Let $n \geq 6$ and $a \in \mathfrak{A}_n$. For $a = \mathrm{id}$ take $x = y = \sigma_n$. Assume from now on that $a \neq \mathrm{id}$. By Proposition \ref{prop:type-A}, we can find an element $a'$ contained in the same $\mathfrak{S}_n$-conjugacy class as $a$ such that there is $\pi \in C(n)$ with $[\sigma_n, \pi] = a'$. Furthermore, since $a$ and $a'$ are conjugate, there is $\tau \in \mathfrak{S}_n$ with $\tau a' \tau^{-1} = a$. Let $x := \tau \sigma_n \tau^{-1}$ and $y := \tau \pi \tau^{-1}$. It follows that $x, y \in C(n)$ and $[x, y] = a$, thereby concluding the proof.
\end{proof}

\begin{remark}
    The fact that the permutation $\pi$ in Proposition \ref{prop:type-A} is parity unstable is not necessary for the proof of Theorem \ref{thm:type-A}. However, we will make use of this fact in Section \ref{sec:ProofThmB} which is why it is included in the statement of the proposition.
\end{remark}
 
To prove Proposition \ref{prop:type-A}, we will need Lemma \ref{lem:Bader1} and Lemma \ref{lem:Bader2} below. These two results can be deduced from \cite[Lemma 2.9]{vavpetic}. We include short proofs which in particular contain the construction of the permutations $\pi_+$ and $\pi$ appearing in the lemmata, since we will need to determine their parity stability. In the proofs, for two permutations $\alpha$ and $\beta$ on different elements, we use the notation $\alpha \sqcup \beta$ to indicate the permutation that does both at the same time. For example, if $\alpha = (1 \, 2 \, 3)$ and $\beta = (4 \, 5) \, (6),$ then $\alpha \sqcup \beta = (1 \, 2 \, 3) \, (4 \, 5) \, (6).$

\begin{lemma}\label{lem:Bader1}
Let $n\geq 2$ and $\pi\in C(n)$ with $\pi(1)=2$ such that $[\sigma_n,\pi]\in C(k_1, \ldots,k_m)$. Then there exists $\pi_{+}\in C(n+1)$ with $\pi_{+}(1)=2$ such that $[\sigma_{n+1},\pi_{+}]\in C(k_1,\ldots , k_m,1)$.
\end{lemma}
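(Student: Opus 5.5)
We construct $\pi_+$ from $\pi$ by inserting the new letter $n+1$ into the cycle of $\pi$ immediately before $1$. Then we check directly that the commutator $[\sigma_{n+1},\pi_+]$ fixes $n+1$ and agrees with $[\sigma_n,\pi]$ on $\{1,\ldots,n\}$, which gives the cycle type $(k_1,\ldots,k_m,1)$.

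Concretely, let $a:=\pi^{-1}(1)$; since $\pi$ is an $n$-cycle with $n\geq 2$, we have $a\neq 1$. Define $\pi_+\in\mathfrak{S}_{n+1}$ by
$$\pi_+(a)=n+1,\qquad \pi_+(n+1)=1,\qquad \pi_+(x)=\pi(x)\ \text{for } x\notin\{a,n+1\}.$$
Then $\pi_+$ is an $(n+1)$-cycle, and $\pi_+(1)=\pi(1)=2$ because $1\neq a$. Recall also that $\sigma_{n+1}$ agrees with $\sigma_n$ except that $\sigma_{n+1}(n)=n+1$ and $\sigma_{n+1}(n+1)=1$.

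We compare $c_+(x):=\sigma_{n+1}^{-1}\pi_+^{-1}\sigma_{n+1}\pi_+(x)$ with $c(x):=\sigma_n^{-1}\pi^{-1}\sigma_n\pi(x)$, following $x$ through the four maps.

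\begin{enumerate}[label=(\roman*)]
\item For $x=n+1$ the chain is $n+1\mapsto 1\mapsto 2\mapsto 1\mapsto n+1$, using $\pi_+^{-1}(2)=1$. So $n+1$ is fixed.
\item For $x=a$ the chain is $a\mapsto n+1\mapsto 1\mapsto n+1\mapsto n$. The original chain is $a\mapsto 1\mapsto 2\mapsto 1\mapsto n$, so $c_+(a)=c(a)$.
\item For $x\notin\{a,n+1\}$ put $y=\pi(x)\neq 1$.
\begin{itemize}
\item If $y=n$, then $\sigma_{n+1}(y)=n+1$, $\pi_+^{-1}(n+1)=a$ and $\sigma_{n+1}^{-1}(a)=\sigma_n^{-1}(a)$, since $a\neq 1$. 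This matches the original chain $n\mapsto 1\mapsto a\mapsto\sigma_n^{-1}(a)$.
\item If $y\neq n$, then $z=\sigma(y)\notin\{1,n+1\}$, so $\pi_+^{-1}(z)=\pi^{-1}(z)$. The last step $\sigma^{-1}$ could only differ at the value $1$. But $\pi^{-1}(z)=1$ forces $z=2$, hence $y=1$, hence $x=a$, which is excluded.
\end{itemize}
\end{enumerate}

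Thus $[\sigma_{n+1},\pi_+]=[\sigma_n,\pi]\sqcup(n+1)$, which lies in $C(k_1,\ldots,k_m,1)$. The only delicate point is the bookkeeping at the letters $1$, $n$, $n+1$ and $a$ where $\sigma$ or $\pi$ was modified, and the case analysis above covers each of these.
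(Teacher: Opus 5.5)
Your proposal is correct and uses the same construction as the paper: $\pi_+=(1\ 2\ i_3\ \ldots\ i_n\ n+1)$, i.e.\ $n+1$ is inserted directly after $i_n=\pi^{-1}(1)$. The paper only asserts that $[\sigma_{n+1},\pi_+]=[\sigma_n,\pi]\sqcup(n+1)$, and your case analysis on $x=n+1$, $x=a$, and $\pi(x)=n$ versus $\pi(x)\neq n$ correctly supplies that verification.
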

\begin{proof}
Since $\pi$ is an $n$-cycle with $\pi(1) = 2,$ we can write $\pi = (1 \ 2 \ i_3 \ \ldots \ i_n)$ where $i_j \in \{3, \ldots , n\}$. Define $\pi_+ := (1 \ \ 2 \ \ i_3 \ \ \ldots \ \ i_n \ \ n+1) \in C(n+1)$. Let $\alpha := [\sigma_n, \pi] \in C(k_1, \ldots , k_m).$ Then, $[\sigma_{n+1}, \pi_+] = \alpha \sqcup (n+1) \in C(k_1, \ldots , k_m, 1).$ 
\end{proof}

\begin{lemma}\label{lem:Bader2}
Let $n_1, n_2 \geq 2$ and $n := n_1+n_2$. Let $\pi_1 \in C(n_1), \pi_2 \in C(n_2)$ with $\pi_1(1) = \pi_2(1) = 2$ such that $[\sigma_{n_1}, \pi_1] \in C(k_1, \ldots , k_m)$ and $[\sigma_{n_2}, \pi_2] \in C(\ell_1, \ldots , \ell_s)$. Then there exists $\pi \in C(n)$ with $\pi(1) = 2$ such that
$$
[\sigma_n, \pi] \in C(k_1, \ldots, k_m, \ell_1, \ldots , \ell_s).
$$
\end{lemma}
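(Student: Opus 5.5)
The plan is to glue the two cycles along the "wrap-around" points of $\sigma_{n_1}$ and a shifted copy of $\sigma_{n_2}$, and then check pointwise that the commutator does not notice the gluing. Write $\pi_1 = (1\ 2\ a_3\ \ldots\ a_{n_1})$ and $\pi_2 = (1\ 2\ b_3\ \ldots\ b_{n_2})$. Shift $\pi_2$ and $\sigma_{n_2}$ by $n_1$, i.e.\ let $b_j' := b_j + n_1$, $\pi_2'=(n_1+1\ \ n_1+2\ \ b_3'\ \ldots\ b_{n_2}')$ and $\sigma' = (n_1+1\ \ldots\ n)$. Then $[\sigma', \pi_2']$ is just a relabelled copy of $[\sigma_{n_2},\pi_2]$, so
$$[\sigma_{n_1},\pi_1]\sqcup[\sigma',\pi_2'] \in C(k_1,\ldots,k_m,\ell_1,\ldots,\ell_s).$$
I define
$$\pi := (1\ \ 2\ \ a_3\ \ldots\ a_{n_1}\ \ n_1+1\ \ n_1+2\ \ b_3'\ \ldots\ b_{n_2}') \in C(n),$$
which clearly satisfies $\pi(1)=2$. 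The claim to prove is then
$$[\sigma_n,\pi] = [\sigma_{n_1},\pi_1]\sqcup[\sigma',\pi_2'].$$

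For the verification, compare $\sigma_n$ with $\rho := \sigma_{n_1}\sqcup\sigma'$, and $\pi$ with $\pi_1\sqcup\pi_2'$.
\begin{itemize}
\item $\sigma_n$ and $\rho$ differ only at $n_1$ and $n$: we have $\sigma_n(n_1)=n_1+1$ versus $\rho(n_1)=1$, and $\sigma_n(n)=1$ versus $\rho(n)=n_1+1$.
\item $\pi$ and $\pi_1\sqcup\pi_2'$ differ only at $a_{n_1}$ and $b_{n_2}'$: we have $\pi(a_{n_1})=n_1+1$ versus $1$, and $\pi(b_{n_2}')=1$ versus $n_1+1$.
\end{itemize}
We then evaluate $x\mapsto \sigma_n^{-1}\pi^{-1}\sigma_n\pi(x)$ and track each of the four steps. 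A discrepancy can enter only when an intermediate value hits one of these exceptional points.

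The cases are as follows.
\begin{itemize}
\item $x=a_{n_1}$. Here $\pi$ gives $n_1+1$, then $\sigma_n$ gives $n_1+2$, then $\pi^{-1}$ gives $n_1+1$, and $\sigma_n^{-1}$ gives $n_1$. In the separated picture, $a_{n_1}\mapsto 1\mapsto 2\mapsto 1\mapsto n_1$, so both give $n_1$.
\item $x=b_{n_2}'$. Symmetrically, both computations give $n$.
\item $\pi(x)\in\{n_1,n\}$, so that the $\sigma$-step differs. Then $\pi^{-1}$ sends $n_1+1$ (respectively $1$) back to $a_{n_1}$ (respectively $b_{n_2}'$). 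This is exactly what $(\pi_1\sqcup\pi_2')^{-1}$ does to $1$ (respectively $n_1+1$), so the intermediate values agree again.
\item The last $\sigma^{-1}$-step differs only on inputs $1$ and $n_1+1$. These inputs arise only for $x=b_{n_2}'$ and $x=a_{n_1}$, which were already treated.
\end{itemize}
Away from these points all four maps agree with their split counterparts, so the commutators coincide.

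The only real obstacle is choosing the right gluing, i.e.\ where to splice the second block into the first cycle so that the two boundary defects of $\sigma_n$ cancel against those of $\pi$. Once $\pi$ is written down as above, the remaining work is the finite case check just described. I would also record the positions of the $b_j'$ in $\pi$, since the later parity-stability arguments in Section~\ref{sec:ProofThmB} will need them.
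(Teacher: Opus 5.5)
Your proposal is correct and is the paper's proof: the paper uses the same glued cycle $\pi=(1\ 2\ a_3\ \ldots\ a_{n_1}\ \ n_1+1\ \ n_1+2\ \ b_3'\ \ldots\ b_{n_2}')$ and only asserts that $[\sigma_n,\pi]=[\sigma_{n_1},\pi_1]\sqcup[\sigma',\pi_2']$ is straightforward to check. Your case analysis supplies that check, comparing $\sigma_n,\pi$ with $\sigma_{n_1}\sqcup\sigma'$ and $\pi_1\sqcup\pi_2'$ at their two exceptional points, and it is correct. The later parity-defect argument only needs that the first $n_1$ entries of $\pi$ coincide with those of $\pi_1$, so you do not need to record the positions of the $b_j'$.
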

\begin{proof}
Analogously to the proof of the previous lemma, we write $\pi_1 = (1 \ 2 \ i_3 \ \ldots \ i_{n_1}),$ $\pi_2 = (1 \ 2 \ j_3 \ \ldots \ j_{n_2}),$ where $i_l \in \{3, \ldots , n_1\}$ and $j_m \in \{3, \ldots , n_2\}.$ Let $\alpha := [\sigma_{n_1}, \pi_1] \in C(k_1, \ldots , k_m), \ \beta := [\sigma_{n_2}, \pi_2] \in C(\ell_1, \ldots , \ell_s)$ and define 
$$\pi := (1 \ \ 2 \ \ i_3 \ \ ... \ \ i_{n_1} \ \ n_1+1 \ \ n_1+2 \ \ n_1+j_3 \ \ ... \ \ n_1+j_{n_2}).$$ 
By definition $\pi \in C(n),$ and it is straightforward to check that $[\sigma_n, \pi] = \alpha \sqcup(\beta + n_1) \in C(k_1, \ldots , k_m, \ell_1, \ldots , \ell_s),$ where the notation $\beta+n_1$ means to add $n_1$ to each element of $\beta.$
\end{proof}

\begin{remark}\label{rem:ParityDefectStability}
    We observe the following.
    \begin{enumerate}[label=\alph*), font=\normalfont]
        \item In the situation of Lemma \ref{lem:Bader1}, if $\pi$ has a parity defect, then $\pi_+$ has a parity defect as well.
        \item In the situation of Lemma \ref{lem:Bader2}, if $\pi_1$ has a parity defect, then $\pi$ has a parity defect as well.
    \end{enumerate}
\end{remark}

\subsection{Explicit permutations for irreducible conjugacy classes}
In this subsection, we use explicit $n$-cycles $\pi$ to show the statement of Proposition \ref{prop:type-A} for the following list of conjugacy classes, which we refer to as \textit{irreducible}:

$$\begin{array}{cc}
    C(n) & n \geqslant 5 \text{ odd,} \\[0.5ex]
    C(2k, 2l) & k, l \geqslant 1, \text{ except } k = l = 1, \\[0.5ex]
    C(n, 3) & n \geqslant 1 \text{ odd,} \\[0.5ex]
    C(2k, 2l, 3) & k, l \geqslant 1, \\[0.5ex]
    C(n, 2, 2) & n \geqslant 3 \text{ odd,} \\[0.5ex]
    C(2k, 2, 2, 2) & k \geqslant 1. \\[0.5ex]
\end{array}$$

The existence of these permutations follows from \cite{vavpetic} in the cases where the conjugacy class is contained in a symmetric group on an odd number of elements. For a conjugacy class in a symmetric group on an even number $n$ of elements, Vavpeti{\v{c}} constructs two $(n-1)$-cycles whose commutator is in the conjugacy class and remarks that the result can be extended to $n$-cycles as well.

For each of the irreducible conjugacy classes, we construct in Appendix~\ref{sec:appA} an $n$-cycle $\pi$  with $\pi(1) = 2$, which is parity unstable and which fulfills the assertion of Proposition~\ref{prop:type-A}, such that the commutator $[\sigma_n, \pi]$ lies in this class.

Finally, we are ready to prove Proposition \ref{prop:type-A}. The proof of the proposition uses Lemma \ref{lem:Bader1} and Lemma \ref{lem:Bader2}, and is similar to the induction arguments used in \cite[Corollary 2.5]{vavpetic} and \cite[Corollary 2.7]{vavpetic}.

\begin{proof}[Proof of Proposition \ref{prop:type-A}]
    In this proof, whenever we say \emph{a permutation for the conjugacy class} $C(l_1, \ldots , l_s),$ we mean an $m$-cycle $\pi,$ where $m := l_1+...+l_s,$ with the properties $\pi(1) = 2$ and $[\sigma_m, \pi] \in C(l_1, \ldots , l_s).$ Furthermore, we say that $C(l_1, \ldots , l_s)$ \emph{contains a number} $l$ if $l_j = l$ for some $j = 1, \ldots , s.$ 
    
    Let $C(k_1, \ldots, k_m)$ be an even conjugacy class. Our goal is to show that there is a parity unstable $\pi$ for $C(k_1, \ldots, k_m).$ 

    If $C(k_1, \ldots, k_m)$ does not contain the numbers $1, 2$ and $3,$ then we can find permutations for the irreducible conjugacy classes $C(k_i)$ with $k_i$ odd and $C(k_j, k_{j'})$ with $k_j, k_{j'}$ even. Then, by Lemma \ref{lem:Bader2} we can also find a $\pi$ for $C(k_1, \ldots , k_m).$

    So, we need to consider the cases where $C(k_1, \ldots, k_m)$ contains $1, 2$ or $3.$ The subtlety in these cases comes from the fact that there do not exist permutations for the conjugacy classes $C(3), C(2,2)$ and $C(2,2,1)$.

    Consider the permutations $(1 \ 2 \ 4 \ 3 \ 6 \ 5)$ and $(1 \ 2 \ 4 \ 9 \ 5 \ 7 \ 8 \ 6 \ 3).$ Their commutator with the corresponding shift permutations is contained in $C(3,3)$ and $C(3,3,3)$ respectively. So, by Lemma \ref{lem:Bader2}, we can find permutations for any conjugacy class of the form $C(3, \ldots, 3)$, that is, for any conjugacy class containing only and at least two $3$'s.

    Consider the permutations $(1 \ 2 \ 5 \ 4 \ 7 \ 8 \ 6 \ 3)$ and $(1 \ 2 \ 8 \ 12 \ 9 \ 6 \ 7 \ 3 \ 5 \ 10 \ 11 \ 4).$ Their commutator with the corresponding shift permutations is contained in $C(2,2,2,2)$ and $C(2,2,2,2,2,2)$ respectively. So, by Lemma \ref{lem:Bader2}, we can find permutations for any conjugacy class of the form $C(2, \ldots, 2),$ that is, for any conjugacy class containing only and at least two pairs of $2$'s.

    Consider the permutations $(1 \ 2 \ 4 \ 6 \ 7 \ 5 \ 3),$ $(1 \ 2 \ 4 \ 6 \ 7 \ 9 \ 10 \ 5 \ 8 \ 3)$ and $(1 \ 2 \ 7 \ 5 \ 3 \ 11 \ 10 \ 4 \ 6 \ 9 \ 8)$. The commutators of these three permutations with the corresponding shift permutations lie in $C(3,2,2), \, C(3,3,2,2)$ and $C(3,2,2,2,2)$, respectively. Using these permutations, together with the ones for the conjugacy classes $C(2, \ldots, 2), \, C(3, \ldots,3)$ and Lemma \ref{lem:Bader2}, we can construct permutations for every conjugacy class of the form $C(2, \ldots, 2, 3, \ldots,3)$ for any even number of $2$'s and any number of $3$'s.

    Finally, the commutator of the permutation $(1 \ 2 \ 6 \ 4 \ 5 \ 3)$ with $\sigma_6$ lies in $C(2, 2, 1, 1).$

    We go back to the general case of $C(k_1, \ldots, k_m)$. If $C(k_1, \ldots, k_m)$ contains $3$ and possibly $1$ but not $2$, then using permutations for $C(3, \ldots, 3)$ or the irreducible conjugacy classes, together with Lemmata \ref{lem:Bader1} and \ref{lem:Bader2} yields a permutation for $C(k_1, \ldots k_m)$.

    If $C(k_1, \ldots , k_m)$ contains $2$ and possibly $1$ but not $3$, then using permutations for $C(2, \ldots , 2)$, $C(2,2,1,1)$ and the irreducible conjugacy classes, together with Lemmata \ref{lem:Bader1} and \ref{lem:Bader2} yield a permutation for $C(k_1, \ldots , k_m)$.

    If $C(k_1, \ldots , k_m)$ contains $2$ and $3$ and possibly $1$, then using permutations for\linebreak $C(2, \ldots, 2, 3, \ldots, 3)$ and the irreducible conjugacy classes, together with Lemmata \ref{lem:Bader1} and \ref{lem:Bader2} yields a permutation for $C(k_1, \ldots , k_m)$.

    Finally, if $C(k_1, \ldots , k_m)$ contains $1$, but not $2$ and $3$, then using permutations for the irreducible conjugacy classes, together with Lemma \ref{lem:Bader1} yields a permutation for $C(k_1, \ldots, k_m)$.
    
    To conclude, note that all the permutations constructed for the irreducible conjugacy classes, as well as the ones constructed in this proof, are parity unstable. Together with Remark \ref{rem:ParityDefectStability}, this implies that the permutation constructed for $C(k_1, \ldots, k_m)$ is parity unstable.
\end{proof}

\section{The signed symmetric group}\label{section:type-B}

\begin{definition}
Let \(v\in\mathbb{F}_2^n\) and let
\(\pi\in\mathfrak{S}_n\). We put
\[
(\pi\mathbin{\cdot}v)_i:=v_{\pi^{-1}(i)}
\qquad (1\leq i\leq n).
\]
This defines a left action of \(\mathfrak{S}_n\) on
\(\mathbb{F}_2^n\), since
\[
(\pi\tau)\mathbin{\cdot}v
=
\pi\mathbin{\cdot}(\tau\mathbin{\cdot}v)
\qquad
(\pi,\tau\in\mathfrak{S}_n).
\]
\end{definition}

The Coxeter group \(W_{B_n}\) of type \(B_n\) can be realised as the
full symmetry group of the~$n$-dimensional hypercube $Q_n=[-1,1]^n$. Equivalently, it is the
group of signed permutations of~$n$ coordinates. Thus,
\[
W_{B_n}\cong
(\mathbb{Z}/2\mathbb{Z})^n\rtimes\mathfrak{S}_n,
\]
where the group~\(\mathfrak{S}_n\) acts by permuting
the coordinates, or equivalently, the factors of
\((\mathbb{Z}/2\mathbb{Z})^n\). Using the multiplicative realisation of the cyclic group of order two,
we may identify
\[
(\mathbb{Z}/2\mathbb{Z})^n
\cong
\left\{
(\varepsilon_1,\ldots,\varepsilon_n)
\ \middle|\
\varepsilon_i\in\{\pm1\},\ 1\leq i\leq n
\right\},
\]
where $\varepsilon_i$ records whether the $i$-th coordinate is left unchanged or sign-reversed.

For the computations below, it is convenient to identify the cyclic
group of order two with the additive group of the finite field
\(\mathbb{F}_2\). Accordingly, we write
\[
W_{B_n}\cong\mathbb{F}_2^n\rtimes\mathfrak{S}_n,
\]
and record that for
\(a,b\in\mathbb{F}_2^n\) and
\(\pi,\tau\in\mathfrak{S}_n\), the multiplication and inverse are given by
\[
(a,\pi)(b,\tau)
=
\bigl(a+\pi\mathbin{\cdot}b,\pi\tau\bigr)
\qquad\ \textup{and}\ \qquad\ 
(a,\pi)^{-1}
=
\bigl(\pi^{-1}\mathbin{\cdot}a,\pi^{-1}\bigr),
\]
respectively.\par
Since we want to show a result about commutators in $W_{B_n}$, we now explicitly calculate the commutator of two elements: let $x = (a,\pi), ~y = (b, \tau) \in W_{B_n}$. Then

\begin{equation}
\begin{aligned}
\relax [x,y]
={}&\Bigl(
 \pi^{-1}\mathbin{\cdot}a
 +(\pi^{-1}\tau^{-1})\mathbin{\cdot}b
 +(\pi^{-1}\tau^{-1})\mathbin{\cdot}a
 +(\pi^{-1}\tau^{-1}\pi)\mathbin{\cdot}b,
 [\pi,\tau]\Bigr)\\
={}&\Bigl(
 (\pi^{-1}\tau^{-1})\mathbin{\cdot}
 (a+\tau\mathbin{\cdot}a+b+\pi\mathbin{\cdot}b),
 [\pi,\tau]\Bigr).
\end{aligned}
\label{equ:commutatorBn}
\end{equation}
Indeed, a direct computation shows:
\[
\begin{aligned}
\relax [x,y]
&=(\pi^{-1}\mathbin{\cdot}a,\pi^{-1})
  (\tau^{-1}\mathbin{\cdot}b,\tau^{-1})(a,\pi)(b,\tau)\\
&=\bigl(\pi^{-1}\mathbin{\cdot}a
 +(\pi^{-1}\tau^{-1})\mathbin{\cdot}b,
 \pi^{-1}\tau^{-1}\bigr)(a,\pi)(b,\tau)\\
&=\bigl(\pi^{-1}\mathbin{\cdot}a
 +(\pi^{-1}\tau^{-1})\mathbin{\cdot}b
 +(\pi^{-1}\tau^{-1})\mathbin{\cdot}a,
 \pi^{-1}\tau^{-1}\pi\bigr)(b,\tau)\\
&=\Bigl(\pi^{-1}\mathbin{\cdot}a
 +(\pi^{-1}\tau^{-1})\mathbin{\cdot}b
 +(\pi^{-1}\tau^{-1})\mathbin{\cdot}a
 +(\pi^{-1}\tau^{-1}\pi)\mathbin{\cdot}b,
 [\pi,\tau]\Bigr)\\
&=\Bigl(
 (\pi^{-1}\tau^{-1})\mathbin{\cdot}
 (a+\tau\mathbin{\cdot}a+b+\pi\mathbin{\cdot}b),
 [\pi,\tau]\Bigr).
\end{aligned}
\]

For the following description of a Coxeter-generating set, we briefly
use the equivalent signed-permutation realisation of the group $W_{B_n}$. Namely, we identify
\(W_{B_n}\) with the group of all permutations~$w$ of
\[
\{\pm1,\ldots,\pm n\}
\]
satisfying
\[
w(-i)=-w(i)
\qquad (1\leq i\leq n).
\]
We use ordinary cycle notation on the signed
indices. Thus, \((i\, -i)\) represents a sign change in the \(i\)-th
coordinate, while
\[
(i\, i+1)(-i\, -(i+1))
\]
interchanges the \(i\)-th and \((1+i)\)-th coordinate directions. Then, a standard Coxeter-generating set for the group $W_{B_n}$ is
\begin{align*}
s_0&=(1\ \ -1),\\
s_i&=(i\ \ i+1)(-i\ \ -(i+1)),
\qquad 1\leq i\leq n-1.
\end{align*}
Equivalently, in the semidirect-product notation, if
\(e_1=(1,0,\ldots,0)\), then
\[
s_0=(e_1,\operatorname{id}),
\qquad
s_i=(0,(i\ \ i+1)).
\]
Hence, one of the Coxeter elements of the group $W_{B_n}$ is given by 
\begin{align} \label{equ:CoxElement}
c:= s_0 s_1 \cdots s_{n-1}
= (1\ \ 2 \ \ldots \ n\ \ -1 \ \ -2 \ \ldots \  -n). 
\end{align}

\begin{lemma}[{\cite[§3.16, Proposition]{Hum90}}]\label{lem:CoxEltsBnAreOneConjugacyClass}
In the group $W_{B_n}$, the set of all Coxeter elements forms precisely one conjugacy class.
\end{lemma}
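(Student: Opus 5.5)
The plan is the classical argument from Humphreys, adapted to the concrete model of $W_{B_n}$. There are two parts: first show that all Coxeter elements, taken with respect to the fixed generating set $S=\{s_0,\dots,s_{n-1}\}$, are conjugate; then deal with the dependence on the choice of simple system.

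\textbf{Step 1: the Coxeter graph is a tree.} The Coxeter graph of $B_n$ is a path, so any two orderings $c=s_{i_1}\cdots s_{i_n}$ and $c'=s_{j_1}\cdots s_{j_n}$ of $S$ are related by two kinds of moves:
\begin{enumerate}[label=\roman*), font=\normalfont]
  \item swapping adjacent commuting generators, which does not change the element;
  \item cyclic rotation $s_{i_1}s_{i_2}\cdots s_{i_n}\mapsto s_{i_2}\cdots s_{i_n}s_{i_1}$, which is conjugation by $s_{i_1}$.
\end{enumerate}
For a forest, the standard argument goes as follows. A product of all generators is determined, up to commutation moves, by an orientation of the graph: an edge is oriented according to which of its endpoints occurs first. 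Rotating a source to the end reverses all arrows at that vertex. On a tree, any orientation can be transformed into any other by repeatedly flipping sources (or sinks). One way to see this is induction on $n$: remove a leaf, fix the orientation on the rest, then adjust the edge to the leaf by a single flip at the leaf. Hence all Coxeter elements built from $S$ are conjugate, in particular conjugate to $c$ from \eqref{equ:CoxElement}.

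\textbf{Step 2: independence of the simple system.} If ``Coxeter element'' refers to an arbitrary simple system, note that any two simple systems of the root system $B_n$ are conjugate under $W_{B_n}$, via simple transitivity on Weyl chambers. Conjugating by the corresponding element $w$ carries the simple reflections of one system to those of the other. It therefore carries Coxeter elements to Coxeter elements, and Step 1 finishes the argument.

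Conversely, every conjugate of a Coxeter element is again a Coxeter element, since it is a product of the conjugated simple reflections, which form a simple system. So the set is exactly one conjugacy class.

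\textbf{Main obstacle and a concrete check.} The only nontrivial point is the combinatorial lemma that source-flips connect all acyclic orientations of a tree; everything else is formal. In our concrete model, one can alternatively verify directly that every ordering of $S$ gives a $2n$-cycle on $\{\pm1,\dots,\pm n\}$ with $w(-i)=-w(i)$, i.e.\ an $n$-cycle with negative sign product. These elements form one conjugacy class $C(n)^-$ of $W_{B_n}$, because signed cycle type classifies conjugacy. This is the form in which the lemma will be used for Theorem B.
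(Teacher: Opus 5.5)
Your proposal is correct and is essentially the standard argument of Humphreys \S3.16, which the paper cites without giving a proof of its own. That argument runs as follows: products of the simple reflections in any order are conjugate because the Coxeter graph is a tree, and the $W$-conjugacy of simple systems removes the dependence on the choice of $S$ and shows the set is closed under conjugation. One small point to tighten in your leaf induction: when the flip sequence on the smaller tree flips the leaf's neighbour $v$, you may first have to flip the leaf, so that $v$ is a genuine source (or sink) in the whole tree.
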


We now briefly recall a standard fact about the set of all Coxeter elements of $W_{B_n}$ and show how this allows only certain entries in $\mathbb{F}_2^n$ for those elements.

\begin{proposition}
Let $x = (v,\pi) \in W_{B_n} = \mathbb{F}_2^n \rtimes \mathfrak{S}_n$. Then $x$ is a Coxeter element if and only if $\sum_{i=1}^n v_i = 1$ and $\pi$ is an $n$-cycle.
\end{proposition}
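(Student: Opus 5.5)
By Lemma~\ref{lem:CoxEltsBnAreOneConjugacyClass}, the Coxeter elements form exactly one conjugacy class, namely the class of the element $c$ from \eqref{equ:CoxElement}. So the plan is to do two things: first, determine the semidirect-product form of $c$ and check that it has the claimed shape; second, show that the set
\[
X=\Bigl\{(v,\pi)\ \Big|\ \textstyle\sum_i v_i=1,\ \pi \text{ an } n\text{-cycle}\Bigr\}
\]
is a single conjugacy class of $W_{B_n}$.

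\textbf{Step 1 (the element $c$).} Reading $c=(1\ 2\ \ldots\ n\ -1\ \ldots\ -n)$ in the semidirect-product notation gives $c=(e_1,\sigma_n)$ (up to where the single sign sits). Equivalently, $c=s_0s_1\cdots s_{n-1}=(e_1,\mathrm{id})(0,\sigma_n)$, using that the product of the transpositions $(i\ i+1)$ is the $n$-cycle $\sigma_n$ in the right order. In either description the permutation part is an $n$-cycle and the sign vector has exactly one nonzero entry, so $c\in X$.

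\textbf{Step 2 ($X$ is conjugation-invariant).} Conjugate $(v,\pi)$ by $(a,\tau)$ using the multiplication rule. Its permutation part is $\tau\pi\tau^{-1}$, which is again an $n$-cycle. Its vector part is
\[
a+\tau\cdot v+(\tau\pi\tau^{-1})\cdot a .
\]
Permuting coordinates preserves the coordinate sum, so this vector has sum $\sum a_i+\sum v_i+\sum a_i=\sum v_i$. This shows $X$ is a union of conjugacy classes, and in particular it contains the whole class of $c$.

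\textbf{Step 3 (transitivity on $X$).} Take $(v,\pi)\in X$.
\begin{enumerate}
\item Conjugate by some $(0,\tau)$ to reduce to the case $\pi=\sigma_n$.
\item Conjugate by $(a,\mathrm{id})$. Writing $\sigma=\sigma_n$, the new vector is $v+a+\sigma\cdot a$.
\item The map $a\mapsto a+\sigma\cdot a$ on $\mathbb{F}_2^n$ has kernel consisting of the $\sigma$-invariant vectors. Since $\sigma$ is an $n$-cycle, these are only $0$ and $(1,\ldots,1)$, so the image has dimension $n-1$.
\item The image lies in the even-weight hyperplane, which also has dimension $n-1$, so the image is exactly that hyperplane.
\item Hence we can add any even-weight vector to $v$. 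Since $v$ has odd weight, we can turn it into $e_1$ (or whichever vector appears in $c$).
\end{enumerate}
So every element of $X$ is conjugate to $c$, and $X$ is precisely the set of Coxeter elements.

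The main obstacle is Step 3(3)–(4): identifying the image of $1+\sigma$ with the even-weight hyperplane via the kernel computation. The rest is bookkeeping with the convention $(\pi\cdot v)_i=v_{\pi^{-1}(i)}$.
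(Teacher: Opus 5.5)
Your proof is correct, but it takes a different route from the paper. Both arguments rely on Lemma~\ref{lem:CoxEltsBnAreOneConjugacyClass} and on the computation $c=(e_1,\sigma_n)$.

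The paper then invokes the classification of conjugacy classes of $W_{B_n}$ by signed cycle type \cite{Car72}. It notes that the sign of a cycle of $\pi$ is the sum of the $v_j$ over its support, and reads off the class of $c$ immediately.

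You instead prove by hand exactly the special case of that classification that is needed:
\begin{enumerate}[label=\alph*), font=\normalfont]
\item The coordinate sum $\sum_i v_i$ is a conjugation invariant. Your vector part $a+\tau\cdot v+(\tau\pi\tau^{-1})\cdot a$ is computed correctly.
\item Conjugation by pure sign changes $(a,\idop)$ acts transitively on the odd-weight vectors over the fixed $n$-cycle $\sigma_n$. This holds because $\mathrm{im}(\idop+P_{\sigma_n})=\mathbb{F}_{2,\mathrm{even}}^n$, which follows from $\ker(\idop+P_{\sigma_n})=\{0,\mathbf{1}\}$ and a dimension count.
\end{enumerate}

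The paper's version is shorter. However, it rests on an external classification theorem and on correctly translating ``sign of a cycle'' into semidirect-product coordinates.

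Your version is self-contained. It uses only the multiplication rule and the same rank--nullity argument the paper later uses in Lemma~\ref{lem:ImageOfMpi}, applied here to all of $\mathbb{F}_2^n$, where the image is the full even-weight hyperplane for every $n$. It is also insensitive to where the single sign of $c$ sits. Your hedge ``up to where the single sign sits'' is therefore harmless, and this is convenient since the paper itself writes $c$ once as $(e_1,\sigma_n)$ and once as $(e_n,\sigma_n)$.
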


\begin{proof}
Considering the simple reflections $s_0, s_1, \ldots, s_{n-1}$ as elements of $\mathbb{F}_2^n \rtimes \mathfrak{S}_n$, we have
\begin{align*}
    s_0 & = (e_1, \idop)\\
    s_i & = (0, (i\ \ i+1)) \quad (1 \leq i \leq n-1)\\
    c & = s_0s_1 \cdots s_{n-1} = (e_1, (1,\ldots,n))
\end{align*}

A conjugacy class in $W_{B_n}$ is characterized by its signed cycle type (see \cite{Car72}). Considering $W_{B_n}$ again as the group $\mathbb{F}_2^n \rtimes \mathfrak{S}_n$, an element $x \in W_{B_n}$ is given as $(v,\pi)$, where $v \in \mathbb{F}_2^n$ and $\pi \in \mathfrak{S}_n$. If $\pi = \pi_1 \cdots \pi_r$ is the disjoint cycle decomposition, each of the cycles $\pi_i$ is equipped with a sign. The sign of $\pi_i$ is negative if $\displaystyle \sum_{j \in \mathrm{supp}(\pi_i)} v_j = 1$, and positive otherwise. Therefore, the conjugacy class of the Coxeter element $c = (e_n, (1,\ldots,n))$ consists precisely of those elements $(v, \pi)$, where $\pi$ is an $n$-cycle and $\displaystyle \sum_{i = 1}^n v_i = 1$. The claim follows now from Lemma \ref{lem:CoxEltsBnAreOneConjugacyClass}.
\end{proof}

\section{Sums of permuted vectors in $\mathbb{F}_2^n$} \label{sec:5}

\noindent For $\pi \in \mathfrak{S}_n$ we define the linear map
$$
P_{\pi}: \mathbb{F}_2^n \rightarrow \mathbb{F}_2^n, ~v \mapsto \pi\mathbin{\cdot}v.
$$
We further define the map $M_{\pi} := \idop + P_{\pi}$

\begin{definition}
For $1\leq i\leq n$ we denote by $e_i\in \mathbb{F}_2^n$ the $i$-th unit vector. For $1\leq k, \ell\leq n$ we set 
$$e_{k,\ell} := e_k + e_{\ell},$$
that is, the non-zero entries of $e_{k,\ell}$ are equal to one and occur precisely at positions $k$ and $\ell$. If $k = \ell$, the vector $e_{k, \ell}$ is the zero vector.
\end{definition}

\subsection{Balanced bipartitions}
\begin{definition}
Let $n$ be even and let $\pi = (\pi_1, \ldots, \pi_n) \in \mathfrak{S}_n$ be an $n$-cycle with $\pi_1=1$. We define two subsets
$$
R = R(\pi) := \{ \pi_1, \pi_3,  \ldots, \pi_{n-1} \} \quad \text{and} \quad B = B(\pi) := \{ \pi_2, \pi_4,  \ldots, \pi_{n} \}.
$$ 
We call $\mathcal{P} = \mathcal{P}(\pi) = \{ R,B\}$ the \textbf{balanced bipartition} (induced by $\pi$).
\end{definition}

\begin{definition}
\phantom{0}
    \begin{enumerate}[label=\alph*), font=\normalfont]
        \item We define the set $\mathbb{F}_{2, \mathrm{even}}^n := \left\{ x\in \mathbb{F}_2^n \mid \sum\limits_{i=1}^n x_i = 0\right\} $ and note that it has the structure of a subspace of $\mathbb{F}_2^n$.
        \item We define the set $\mathbb{F}_{2, \mathrm{odd}}^n := \left\{ x\in \mathbb{F}_2^n \mid \sum\limits_{i=1}^n x_i = 1\right\} $ and note that it has the structure of an affine hyperplane in $\mathbb{F}_2^n$.
        \item If $n$ is even and $\pi$ an $n$-cycle with balanced bipartition $\mathcal{P}(\pi) = \{R, B\}$, we define $U_\pi := \left\{ v \in \mathbb{F}_{2, \mathrm{even}}^n \mid \sum\limits_{i \in R} v_i = 0 \right\}$.
    \end{enumerate}
\end{definition}

Note that $U_{\pi}$ is well-defined since $\sum_{i\in R}v_i=0$ if and only if $\sum_{i\in B}v_i=0$.

\begin{lemma}\label{lem:ImageOfMpi}
Let $n$ be even and let $\pi$ be an $n$-cycle. If $\mathcal{P}(\pi) = \{R, B\}$, then
$$
\mathrm{im}\left( \left. M_{\pi}\right\vert_{\mathbb{F}_{2, \mathrm{even}}^n} \right) = U_\pi
$$
and this is a hyperplane in $\mathbb{F}_{2, \mathrm{even}}^n$.
\end{lemma}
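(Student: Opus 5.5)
We prove the lemma by computing $M_\pi$ on a convenient basis and then counting dimensions. Relabel so that $\pi = (\pi_1\ \pi_2\ \ldots\ \pi_n)$ with $\pi_1 = 1$. By definition $(\pi\cdot v)_i = v_{\pi^{-1}(i)}$, so $P_\pi e_j = e_{\pi(j)}$, and therefore
$$M_\pi e_{\pi_k} = e_{\pi_k, \pi_{k+1}},$$
with indices taken mod $n$.

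\textbf{Step 1: a basis of the domain.} The vectors $e_{\pi_k,\pi_{k+1}}$ for $1 \le k \le n-1$ form a basis of $\mathbb{F}_{2,\mathrm{even}}^n$. They are $n-1$ linearly independent vectors: the $k$-th one has its lowest-index support at $\pi_k$ in the $\pi$-order, so they are triangular. All of them lie in the subspace, which has dimension $n-1$.

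\textbf{Step 2: the image of a basis vector.} Applying $M_\pi$ to one of these basis vectors gives
$$M_\pi e_{\pi_k,\pi_{k+1}} = e_{\pi_k,\pi_{k+1}} + e_{\pi_{k+1},\pi_{k+2}} = e_{\pi_k,\pi_{k+2}}.$$
The positions $\pi_k$ and $\pi_{k+2}$ have the same parity of index along the cycle. Since $n$ is even, parity of index mod $n$ is well defined, so both positions lie in $R$ or both lie in $B$. Hence $\sum_{i\in R}$ of this vector is $0$, and the image is contained in $U_\pi$.

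\textbf{Step 3: the reverse inclusion.} By linearity, $\mathrm{im}$ is spanned by the vectors $e_{\pi_k,\pi_{k+2}}$ for $1\le k\le n-1$. These include all consecutive pairs within $R$, namely $e_{\pi_1,\pi_3},\ldots,e_{\pi_{n-3},\pi_{n-1}}$, which give $n/2-1$ vectors. They also include all consecutive pairs within $B$, namely $e_{\pi_2,\pi_4},\ldots,e_{\pi_{n-2},\pi_n}$, which give another $n/2-1$ vectors. Consecutive pairs within a set of size $m$ span the even-weight vectors supported on that set, a space of dimension $m-1$. So the image contains the even-weight vectors supported on $R$ plus the even-weight vectors supported on $B$, of total dimension $n-2$. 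This sum is exactly $U_\pi$: a vector $v \in \mathbb{F}_{2,\mathrm{even}}^n$ with $\sum_{i\in R} v_i = 0$ also has $\sum_{i\in B} v_i = 0$, so it splits into even-weight parts on $R$ and on $B$. Hence $\mathrm{im} = U_\pi$.

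\textbf{Step 4: the hyperplane claim.} $U_\pi$ is the kernel of the nonzero functional $v\mapsto\sum_{i\in R}v_i$ on $\mathbb{F}_{2,\mathrm{even}}^n$. This functional is nonzero because $e_{\pi_1,\pi_2}$ has one coordinate in $R$ and one in $B$, so it takes the value $1$. Therefore $\dim U_\pi = n-2$, and $U_\pi$ is a hyperplane in $\mathbb{F}_{2,\mathrm{even}}^n$.

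The main obstacle is bookkeeping rather than substance. One must check that the action convention gives $P_\pi e_j = e_{\pi(j)}$; with the opposite convention one gets $e_{\pi^{-1}(j)}$, and the argument is symmetric anyway. One must also check that $R$ and $B$ are well defined under wrap-around, which is exactly where the evenness of $n$ is used.
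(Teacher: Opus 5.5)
Your proof is correct, but it reaches the equality by a different route than the paper.

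The paper proves $\mathrm{im}\subseteq U_\pi$ by a direct parity computation for an arbitrary $v\in\mathbb{F}_{2,\mathrm{even}}^n$, using that $\pi^{-1}$ maps $R$ onto $B$. It then gets equality purely by counting. Since $\ker M_\pi=\{0,\mathbf{1}\}$ and $\mathbf{1}\in\mathbb{F}_{2,\mathrm{even}}^n$ for $n$ even, rank--nullity gives $\dim\mathrm{im}=n-2=\dim U_\pi$.

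You instead evaluate $M_\pi$ on the explicit basis $e_{\pi_k,\pi_{k+1}}$ and obtain the images $e_{\pi_k,\pi_{k+2}}$. You then identify their span directly as the even-weight vectors supported on $R$ plus those supported on $B$, and show this sum is $U_\pi$. No kernel computation or rank--nullity is needed.

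What each route buys:
\begin{itemize}
\item The paper's argument is shorter. Its kernel computation also transfers verbatim to the odd case (Lemma~\ref{lem:ImageMpiFornodd}), where $\mathbf{1}\notin\mathbb{F}_{2,\mathrm{even}}^n$.
\item Your argument is constructive. It gives an explicit basis $\{e_{\pi_k,\pi_{k+2}} : 1\le k\le n-2\}$ of the image and the splitting of $U_\pi$ into even-weight parts on $R$ and on $B$. This makes later facts transparent, for instance that $e_{k,\ell}\in U_\pi$ exactly when $k,\ell$ lie in the same block, as used in Lemma~\ref{lem:DistinctBipartitionsImplyDistinctHyperplanes}.
\item Your Step~4 explicitly checks that the functional $v\mapsto\sum_{i\in R}v_i$ is nonzero on $\mathbb{F}_{2,\mathrm{even}}^n$. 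The paper's assertion that $U_\pi$ has dimension $n-2$ leaves this point implicit.
\end{itemize}
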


\begin{proof}
First observe that every vector of the form $M_\pi v=v+\pi\mathbin{\cdot}v$ has even total coordinate sum, because
\[
   \sum_i(v_i+v_{\pi^{-1}(i)})
 =\sum_i v_i+\sum_i v_{\pi^{-1}(i)}
 =2\sum_i v_i=0
\]
in $\mathbb{F}_2$.  Hence $\textup{im}(M_\pi)\subseteq\mathbb{F}_{2,\mathrm{even}}^n$.

The kernel of $M_\pi$ consists of the vectors satisfying $v_i=v_{\pi^{-1}(i)}$ for all $i$.  Since $\pi$ is an $n$-cycle, this means all coordinates of $v$ are equal. Thus
\[
   \ker(M_\pi)=\{0,\mathbf{1}\},
\]
where $\mathbf{1}=(1,1,\ldots,1)$.

Recall that $n$ is even.  Write $\mathcal{P}(\pi)=\{R,B\}$.  The map $\pi$ swaps $R$ and $B$.  If $v\in\mathbb{F}_{2,\mathrm{even}}^n$, then $M_\pi v=v+P_\pi v$ has sum zero over $R$ due to the following argument. First note that
\[
 \sum_{i\in R}(M_\pi v)_i
 =\sum_{i\in R}v_i+\sum_{i\in R}v_{\pi^{-1}(i)}
 =\sum_{i\in R}v_i+\sum_{j\in B}v_j.
\]
Since $n$ is even, $\pi^{-1}$ sends $R$ bijectively onto
$B$. Hence, the second sum is $\sum\limits_{j\in B}^{} v_j$.  Since $v$ lies in~$\mathbb{F}_{2,\mathrm{even}}^n$, we obtain
\[
   \sum_{i\in R}v_i+\sum_{j\in B}v_j=\sum_{i=1}^n v_i=0.
\]
Thus,
$$
\mathrm{im}\left( \left. M_{\pi}\right\vert_{\mathbb{F}_{2, \mathrm{even}}^n} \right) \subseteq \left\{ v \in \mathbb{F}_{2, \mathrm{even}}^n \mid \sum_{i \in R} v_i = 0 \right\}.
$$
It remains to compare dimensions.  The vector $\mathbf{1}$ has even weight because $n$ is even, so $\mathbf{1}\in\mathbb{F}_{2,\mathrm{even}}^n$.  Therefore the kernel of $\left. M_{\pi}\right\vert_{\mathbb{F}_{2, \mathrm{even}}^n}$ has dimension $1$.  Hence, by the rank–nullity theorem, the image has dimension
\[
   (n-1)-1=n-2.
\]
The space $U_\pi$ is one linear equation inside $\mathbb{F}_{2,\mathrm{even}}^n$, so it also has dimension $n-2$.  Since the image is contained in $U_\pi$ and both have the same dimension, they are equal.
\end{proof}

\medskip
\begin{lemma} \label{lem:ImageMpiFornodd}
    Let $n$ be odd and $\pi$ an $n$-cycle. Then
    $\displaystyle \mathrm{im}\left( \left. M_{\pi}\right\vert_{\mathbb{F}_{2, \mathrm{even}}^n} \right) = \mathbb{F}_{2, \mathrm{even}}^n$.
\end{lemma}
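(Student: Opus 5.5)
We argue exactly as in the proof of Lemma~\ref{lem:ImageOfMpi}, by combining an inclusion with a dimension count.

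\emph{Inclusion.} For every $v\in\mathbb{F}_2^n$ the coordinate sum of $M_\pi v=v+\pi\mathbin{\cdot}v$ is
\[
\sum_i v_i+\sum_i v_{\pi^{-1}(i)}=2\sum_i v_i=0 .
\]
Hence $\mathrm{im}(M_\pi)\subseteq\mathbb{F}_{2,\mathrm{even}}^n$, and in particular the image of the restriction $M_\pi\vert_{\mathbb{F}_{2,\mathrm{even}}^n}$ lies in $\mathbb{F}_{2,\mathrm{even}}^n$.

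\emph{Kernel.} As in the proof of Lemma~\ref{lem:ImageOfMpi}, $M_\pi v=0$ means $v_i=v_{\pi^{-1}(i)}$ for all $i$. Since $\pi$ is an $n$-cycle, its orbit is all of $\{1,\ldots,n\}$, so all coordinates of $v$ are equal. Thus $\ker(M_\pi)=\{0,\mathbf{1}\}$.

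Now use that $n$ is odd. The vector $\mathbf{1}$ has coordinate sum $n\equiv 1 \pmod 2$, so $\mathbf{1}\notin\mathbb{F}_{2,\mathrm{even}}^n$. Therefore
\[
\ker\bigl(M_\pi\vert_{\mathbb{F}_{2,\mathrm{even}}^n}\bigr)=\{0,\mathbf{1}\}\cap\mathbb{F}_{2,\mathrm{even}}^n=\{0\},
\]
and the restriction is injective.

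\emph{Dimension count.} An injective linear map from the $(n-1)$-dimensional space $\mathbb{F}_{2,\mathrm{even}}^n$ into itself is surjective. Hence the image is all of $\mathbb{F}_{2,\mathrm{even}}^n$.

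The only point needing care is the parity of $\mathbf{1}$. This is the sole difference from the even case, where $\mathbf{1}\in\mathbb{F}_{2,\mathrm{even}}^n$ forces the image down to the hyperplane $U_\pi$. In the odd case no balanced bipartition exists, and consistently no extra linear condition appears.
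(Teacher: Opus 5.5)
Your proof is correct and takes the same route as the paper. The paper also notes that $\mathbf{1}\notin\mathbb{F}_{2,\mathrm{even}}^n$, so the kernel of the restriction is trivial, and concludes by rank--nullity; the inclusion $\mathrm{im}(M_\pi)\subseteq\mathbb{F}_{2,\mathrm{even}}^n$, which the paper takes implicitly from the proof of Lemma~\ref{lem:ImageOfMpi}, you state explicitly.
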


\begin{proof}
As in the proof of Lemma \ref{lem:ImageOfMpi}, we consider the kernel of $\left. M_{\pi}\right\vert_{\mathbb{F}_{2, \mathrm{even}}^n}$. Since $\mathbf{1}\notin\mathbb{F}_{2,\mathrm{even}}^n$, this kernel is trivial. By the rank-nullity theorem the claim follows.
\end{proof}

\medskip
\begin{lemma}\label{lem:DistinctBipartitionsImplyDistinctHyperplanes}
Let $n$ be even and let $\pi$ and $\tau$ be $n$-cycles such that $\mathcal{P}(\pi) \neq \mathcal{P}(\tau)$. Then also $U_{\pi} \neq U_{\tau}$.
\end{lemma}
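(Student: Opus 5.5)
We prove the contrapositive: if $U_\pi = U_\tau$, then $\mathcal{P}(\pi) = \mathcal{P}(\tau)$. The idea is to recover the bipartition from the hyperplane $U_\pi$, as the set of vectors $e_{k,\ell}$ it contains.

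The key observation is that for $k \neq \ell$ the vector $e_{k,\ell}$ always lies in $\mathbb{F}_{2,\mathrm{even}}^n$. Writing $\mathcal{P}(\pi)=\{R,B\}$, we have
\[
\sum_{i\in R} (e_{k,\ell})_i = |\{k,\ell\}\cap R| \bmod 2 .
\]
This sum is zero exactly when $k$ and $\ell$ lie in the same block of $\mathcal{P}(\pi)$. Hence
\[
e_{k,\ell}\in U_\pi \iff k,\ell \text{ lie in the same block of } \mathcal{P}(\pi).
\]

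So $U_\pi$ determines the equivalence relation ``same block'' on $\{1,\ldots,n\}$, and therefore determines the unordered partition $\mathcal{P}(\pi)$. Concretely, the block containing $1$ is $\{\ell : \ell = 1 \text{ or } e_{1,\ell}\in U_\pi\}$, and the other block is its complement. Since the definition fixes $\pi_1=1$, this block is $R(\pi)$.

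Now suppose $\mathcal{P}(\pi)\neq\mathcal{P}(\tau)$. Both $R(\pi)$ and $R(\tau)$ contain $1$, and the bipartitions are determined by these blocks, so $R(\pi)\neq R(\tau)$. Take some $\ell$ in the symmetric difference, say $\ell\in R(\pi)\setminus R(\tau)$. Then $\ell\neq 1$, and $e_{1,\ell}\in U_\pi$ but $e_{1,\ell}\notin U_\tau$. Therefore $U_\pi\neq U_\tau$.

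The argument is elementary. The only point to handle carefully is the normalisation: we need $\pi_1=\tau_1=1$ (or else must allow the blocks to be swapped) so that comparing unordered bipartitions reduces to comparing the blocks containing $1$.
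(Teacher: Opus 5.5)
Your proof is correct and is essentially the paper's argument: both exhibit a vector $e_{k,\ell}$ whose indices lie in the same block of one bipartition but in different blocks of the other, so it lies in one of the hyperplanes and not the other. The only difference is that you fix $k=1$ using the normalisation $\pi_1=\tau_1=1$, whereas the paper allows itself to swap $R_2$ and $B_2$ if necessary.
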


\begin{proof}
Write $\mathcal{P}(\pi) = \{R_1, B_1\}$ and $\mathcal{P}(\tau) = \{R_2, B_2\}$. By swapping $R_2$ and $B_2$ if necessary, we can assume, since $\mathcal{P}(\pi) \neq \mathcal{P}(\tau)$, that there are $1 \leq k, \ell \leq n$ with $k \in R_1$, $k, \ell \in R_2$ and $\ell \notin R_1$. But then $e_{k, \ell} \notin U_{\pi}$, while $e_{k, \ell} \in U_{\tau}$.
\end{proof}

\begin{proposition}\label{prop:BaakeYieldsVevenForneven}
Let $n \geq 6$ be even and $\pi, \tau \in \mathfrak{S}_n$ be $n$-cycles with $\mathcal{P}(\pi) \neq \mathcal{P}(\tau)$. Then
$$
 \{a+\tau\mathbin{\cdot}a+b+\pi\mathbin{\cdot}b
   \mid a,b\in\mathbb F_{2,\mathrm{odd}}^n\}
 =\mathbb F_{2,\mathrm{even}}^n.
$$
\end{proposition}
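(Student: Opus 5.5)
Our plan is to write $a = a_0 + e_1$ and $b = b_0 + e_1$ with $a_0, b_0 \in \mathbb{F}_{2,\mathrm{even}}^n$. As $a$ and $b$ range over $\mathbb{F}_{2,\mathrm{odd}}^n$, the vectors $a_0$ and $b_0$ range over all of $\mathbb{F}_{2,\mathrm{even}}^n$. Then
\[
a+\tau\cdot a+b+\pi\cdot b = M_\tau a_0 + M_\pi b_0 + M_\tau e_1 + M_\pi e_1 .
\]
Here $M_\tau e_1 = e_{1,\tau(1)}$ and $M_\pi e_1 = e_{1,\pi(1)}$, so the constant term is $w := e_{\tau(1),\pi(1)} \in \mathbb{F}_{2,\mathrm{even}}^n$. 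The set in question is therefore the coset $w + (M_\tau(\mathbb{F}_{2,\mathrm{even}}^n) + M_\pi(\mathbb{F}_{2,\mathrm{even}}^n))$.

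The inclusion $\subseteq$ is immediate: each summand $v+\sigma\cdot v$ has even weight, as in the first step of the proof of Lemma~\ref{lem:ImageOfMpi}. For $\supseteq$ it suffices to show that $M_\tau(\mathbb{F}_{2,\mathrm{even}}^n) + M_\pi(\mathbb{F}_{2,\mathrm{even}}^n) = \mathbb{F}_{2,\mathrm{even}}^n$. Once the linear span is the whole space, the coset $w + \mathbb{F}_{2,\mathrm{even}}^n$ is again $\mathbb{F}_{2,\mathrm{even}}^n$, because $w$ is even.

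By Lemma~\ref{lem:ImageOfMpi}, the two images are $U_\tau$ and $U_\pi$, each a hyperplane in $\mathbb{F}_{2,\mathrm{even}}^n$. By Lemma~\ref{lem:DistinctBipartitionsImplyDistinctHyperplanes}, the hypothesis $\mathcal{P}(\pi)\neq\mathcal{P}(\tau)$ gives $U_\pi\neq U_\tau$. Two distinct hyperplanes of a vector space span the whole space, so $U_\pi + U_\tau = \mathbb{F}_{2,\mathrm{even}}^n$, which finishes the argument.

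The main obstacle is only bookkeeping, namely verifying that the shift $a\mapsto a+e_1$ turns the affine problem into a linear one. One point needs care: $\mathbb{F}_{2,\mathrm{odd}}^n$ is not a subspace, so we must check that the constant term $w$ lies in $\mathbb{F}_{2,\mathrm{even}}^n$, and it does. The hypothesis $n\geq 6$ does not seem to be needed here, beyond $n$ being even so that the balanced bipartitions exist.
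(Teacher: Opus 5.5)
Your proof is correct and follows the paper's argument essentially step for step. You shift $a,b$ by $e_1$ to reduce to the coset $e_{\tau(1),\pi(1)}+U_\tau+U_\pi$, then use Lemmas~\ref{lem:ImageOfMpi} and~\ref{lem:DistinctBipartitionsImplyDistinctHyperplanes} to conclude that two distinct hyperplanes of $\mathbb{F}_{2,\mathrm{even}}^n$ sum to the whole space. Your remark that the hypothesis $n\geq 6$ is not actually used here is also accurate.
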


\begin{proof}
    Since $\mathcal P(\pi)\neq\mathcal P(\tau)$, it follows from Lemma \ref{lem:DistinctBipartitionsImplyDistinctHyperplanes} that $U_\pi \neq U_\tau$. By Lemma \ref{lem:ImageOfMpi}, $U_\pi$ and $U_\tau$ are two hyperplanes in $\mathbb{F}_{2,\mathrm{even}}^n$. Hence, their sum is $\mathbb{F}_{2,\mathrm{even}}^n$. As
    \begin{align*}
    \{a+\tau\mathbin{\cdot}a+b+\pi\mathbin{\cdot}b
      \mid a,b\in\mathbb F_{2,\mathrm{odd}}^n\} & = \{e_1+\tau\mathbin{\cdot}e_1+\widetilde a
       +\tau\mathbin{\cdot}\widetilde a
       +e_1+\pi\mathbin{\cdot}e_1+\widetilde b
       +\pi\mathbin{\cdot}\widetilde b
       \mid \widetilde a,\widetilde b\in
       \mathbb F_{2,\mathrm{even}}^n\} \\
    & = e_{\tau(1),\pi(1)}+U_\tau+U_\pi\\
    & = e_{\tau(1),\pi(1)}+\mathbb F_{2,\mathrm{even}}^n
     =\mathbb F_{2,\mathrm{even}}^n,
    \end{align*} 
    the claim follows.
\end{proof}

\begin{proposition}\label{prop:BaakeYieldsVevenFornodd}
Let $n > 6$ be odd and $\pi, \tau \in \mathfrak{S}_n$ be $n$-cycles. Then
$$
 \{a+\tau\mathbin{\cdot}a+b+\pi\mathbin{\cdot}b
   \mid a,b\in\mathbb F_{2,\mathrm{odd}}^n\}
 =\mathbb F_{2,\mathrm{even}}^n.
$$
\end{proposition}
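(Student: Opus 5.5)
The plan mirrors the proof of Proposition~\ref{prop:BaakeYieldsVevenForneven}, with Lemma~\ref{lem:ImageMpiFornodd} replacing the hyperplane argument. The first step is to check that the set lies in $\mathbb{F}_{2,\mathrm{even}}^n$. Every vector of the form $a+\tau\mathbin{\cdot}a$ or $b+\pi\mathbin{\cdot}b$ has even coordinate sum, since permuting coordinates preserves the sum. Hence the whole set is contained in $\mathbb{F}_{2,\mathrm{even}}^n$, and only the reverse inclusion remains.

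For the reverse inclusion, parametrise the odd vectors as $a = e_1+\widetilde a$ and $b = e_1+\widetilde b$ with $\widetilde a,\widetilde b\in\mathbb{F}_{2,\mathrm{even}}^n$. This is a bijection $\mathbb{F}_{2,\mathrm{even}}^n\to\mathbb{F}_{2,\mathrm{odd}}^n$. Exactly as in the computation in the proof of Proposition~\ref{prop:BaakeYieldsVevenForneven}, the set becomes
\[
e_{\tau(1),\pi(1)} + M_\tau\bigl(\mathbb{F}_{2,\mathrm{even}}^n\bigr) + M_\pi\bigl(\mathbb{F}_{2,\mathrm{even}}^n\bigr).
\]
Here I use that $\tau\mathbin{\cdot}e_1 = e_{\tau(1)}$ and $\pi\mathbin{\cdot}e_1=e_{\pi(1)}$. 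The two copies of $e_1$ cancel, leaving $e_{\tau(1)}+e_{\pi(1)} = e_{\tau(1),\pi(1)}$.

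Now Lemma~\ref{lem:ImageMpiFornodd} applies because $n$ is odd. It says that $M_\tau$ and $M_\pi$ each map $\mathbb{F}_{2,\mathrm{even}}^n$ onto $\mathbb{F}_{2,\mathrm{even}}^n$. Using only one of them already suffices: take $\widetilde b=0$ and let $\widetilde a$ range over $\mathbb{F}_{2,\mathrm{even}}^n$. The set is then the coset $e_{\tau(1),\pi(1)}+\mathbb{F}_{2,\mathrm{even}}^n$. Since $e_{\tau(1),\pi(1)}$ has even weight (it is $0$ if $\tau(1)=\pi(1)$), this coset equals $\mathbb{F}_{2,\mathrm{even}}^n$, which proves the claim.

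I expect no real obstacle; the only point needing care is the bookkeeping that turns the odd-vector parametrisation into the translate by $e_{\tau(1),\pi(1)}$. Note also that no hypothesis relating $\pi$ and $\tau$ is needed here, unlike in the even case, and the bound $n>6$ is not really used: the argument works for every odd $n$, since only the triviality of $\ker M_\pi$ on even vectors enters.
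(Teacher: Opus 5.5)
Your proposal is correct and follows the paper's proof: both write $a=e_1+\widetilde a$ and $b=e_1+\widetilde b$ with $\widetilde a,\widetilde b\in\mathbb{F}_{2,\mathrm{even}}^n$, reduce the set to $e_{\tau(1),\pi(1)}+M_\tau(\mathbb{F}_{2,\mathrm{even}}^n)+M_\pi(\mathbb{F}_{2,\mathrm{even}}^n)$, and conclude with Lemma~\ref{lem:ImageMpiFornodd}. Your side remarks are also correct: once you have the inclusion into $\mathbb{F}_{2,\mathrm{even}}^n$, one of the two images already suffices, and the bound $n>6$ plays no role for odd $n$.
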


\begin{proof}
By Lemma \ref{lem:ImageMpiFornodd} we have
    \begin{align*}
    \{a+\tau\mathbin{\cdot}a+b+\pi\mathbin{\cdot}b \mid a,b\in\mathbb F_{2,\mathrm{odd}}^n\}
    &=\{e_1+\tau\mathbin{\cdot}e_1+\widetilde a+\tau\mathbin{\cdot}\widetilde a
    +e_1+\pi\mathbin{\cdot}e_1+\widetilde b+\pi\mathbin{\cdot}\widetilde b
    \mid \widetilde a,\widetilde b\in\mathbb F_{2,\mathrm{even}}^n\}\\
    &=e_{\tau(1),\pi(1)}+\mathbb F_{2,\mathrm{even}}^n\\
    &=\mathbb F_{2,\mathrm{even}}^n.
    \end{align*}
\end{proof}

\subsection{Commuting $n$-cycles}

\begin{lemma} \label{lem:H1}
    Let $\pi$ be an $n$-cycle. Then $\pi^k$ is an $n$-cycle iff $\gcd(n,k) = 1$.
\end{lemma}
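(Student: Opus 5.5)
The plan is to use the fact that $\pi$, being an $n$-cycle, generates a cyclic subgroup $\langle \pi\rangle$ of order exactly $n$. The cycle type of $\pi^k$ is then determined by the orbits of $\langle \pi^k\rangle$ on $\{1,\ldots,n\}$, and the whole argument reduces to computing the order of $\pi^k$.

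Write $\pi=(\pi_1\ \pi_2\ \ldots\ \pi_n)$, so $\pi(\pi_i)=\pi_{i+1}$ with indices read modulo $n$. Then $\pi^k(\pi_i)=\pi_{i+k}$. Identifying the point $\pi_i$ with the residue $i\in\mathbb{Z}/n\mathbb{Z}$, the permutation $\pi^k$ becomes translation by $k$ on $\mathbb{Z}/n\mathbb{Z}$. The orbit of any point under this translation is the coset $i+\langle k\rangle$ of the subgroup generated by $k$ in $\mathbb{Z}/n\mathbb{Z}$. That subgroup is $\langle \gcd(n,k)\rangle$, so it has order $n/\gcd(n,k)$.

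It follows that $\pi^k$ decomposes into $\gcd(n,k)$ disjoint cycles, each of length $n/\gcd(n,k)$. Hence $\pi^k$ is a single $n$-cycle if and only if it has exactly one orbit, and this happens if and only if $\gcd(n,k)=1$.

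One degenerate case needs a remark. If $n=1$, every power of $\pi$ is the trivial $1$-cycle, and $\gcd(1,k)=1$ always, so the statement still holds. No step here poses a real obstacle; the only point needing care is the standard fact that $\langle k\rangle=\langle\gcd(n,k)\rangle$ in $\mathbb{Z}/n\mathbb{Z}$. This follows from Bézout: $\gcd(n,k)$ is an integer combination of $n$ and $k$, and conversely $k$ is a multiple of $\gcd(n,k)$.
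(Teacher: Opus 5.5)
Your proof is correct. The paper states this lemma without proof and treats it as a standard fact, so there is no competing argument to compare against. Your identification of $\pi^k$ with translation by $k$ on $\mathbb{Z}/n\mathbb{Z}$ is the standard justification. It actually gives the sharper statement that $\pi^k$ is a product of $\gcd(n,k)$ disjoint cycles, each of length $n/\gcd(n,k)$. That sharper form is also what implicitly underlies the paper's proof of Lemma~\ref{lem:H2}: there $\pi^{2k}$ acts as a single cycle on each orbit of $\pi^2$ because $\gcd(n/2,k)=1$.

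One small wording point concerns your opening sentence, which says the argument reduces to computing the order of $\pi^k$. Order alone would not suffice in general, since an element of order $n$ in $\mathfrak{S}_n$ need not be an $n$-cycle. What you actually do, correctly, is compute the orbits of $\pi^k$ directly.
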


\begin{lemma} \label{lem:H2}
    Let $n$ be even. Let $\pi$ be an $n$-cycle and $k$ an integer with $\gcd(n,k) = 1$. Then $\pi^2$ and $(\pi^k)^2$ have the same orbits.
\end{lemma}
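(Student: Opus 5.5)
We prove the statement by writing $\pi=(\pi_1\ \pi_2\ \ldots\ \pi_n)$ and computing the orbits of $\pi^2$ explicitly. Since $n$ is even, $\pi^2$ maps $\pi_i\mapsto\pi_{i+2}$, indices taken modulo $n$. Hence $\pi^2$ is the product of two disjoint $n/2$-cycles, and its orbits are
\[
O_{\mathrm{odd}}=\{\pi_1,\pi_3,\ldots,\pi_{n-1}\}\quad\text{and}\quad O_{\mathrm{even}}=\{\pi_2,\pi_4,\ldots,\pi_n\}.
\]
(If $\pi_1=1$, these are exactly the sets $R(\pi)$ and $B(\pi)$ of the balanced bipartition.)

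Next we determine the orbits of $(\pi^k)^2=\pi^{2k}$. On positions, $\pi^{2k}$ acts by $\pi_i\mapsto\pi_{i+2k}$, again modulo $n$. Because $n$ is even, adding $2k$ preserves the parity of the index $i$ modulo $n$. So each orbit of $\pi^{2k}$ lies inside $O_{\mathrm{odd}}$ or inside $O_{\mathrm{even}}$.

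It remains to show that $\pi^{2k}$ acts transitively on each of these two sets. Restricted to $O_{\mathrm{odd}}$, identify $\pi_{2j+1}$ with $j\in\mathbb{Z}/(n/2)$. Under this identification $\pi^{2k}$ becomes translation by $k$ in $\mathbb{Z}/(n/2)$. Since $\gcd(n,k)=1$ and $n/2$ divides $n$, we get $\gcd(n/2,k)=1$. Therefore translation by $k$ is a single cycle of length $n/2$, and $O_{\mathrm{odd}}$ is one orbit. The same argument applies to $O_{\mathrm{even}}$.

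Thus $\pi^2$ and $(\pi^k)^2$ both have exactly the orbits $O_{\mathrm{odd}}$ and $O_{\mathrm{even}}$. Lemma~\ref{lem:H1} is not needed here, although it is consistent with the setup: it guarantees that $\pi^k$ is itself an $n$-cycle. This also shows $\mathcal{P}(\pi)=\mathcal{P}(\pi^k)$, which is presumably how the lemma is used later. No step is a real obstacle. The only point requiring care is the reduction from $\gcd(n,k)=1$ to $\gcd(n/2,k)=1$, which is what gives transitivity on each half.
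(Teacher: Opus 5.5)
Your proof is correct and follows the paper's argument. Both identify the two orbits of $\pi^2$ as the odd- and even-position entries of the cycle, observe that $\pi^{2k}$ preserves each of them, and deduce transitivity on each half from $\gcd(n/2,k)=1$. Your identification of $\pi^{2k}$ with translation by $k$ on $\mathbb{Z}/(n/2)$ simply spells out the transitivity step that the paper states briefly.
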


\begin{proof}
Let $\pi = (\pi_1, \ldots, \pi_n)$. The two orbits of $\pi^2$ are
$$
\mathcal{O}_1 = \{\pi_1, \pi_3, \ldots, \pi_{n-1}\} \quad \text{and} \quad \mathcal{O}_2= \{\pi_2, \pi_4, \ldots, \pi_{n}\}
$$
We obviously have $\pi^{2k}(\mathcal{O}_1) \subseteq \mathcal{O}_1$ and $\pi^{2k}(\mathcal{O}_2) \subseteq \mathcal{O}_2$. Since 
$$
\pi^2 = (\pi_1, \pi_3, \ldots, \pi_{n-1})(\pi_2, \pi_4, \ldots, \pi_{n}),
$$
the permutation $\pi^2$ acts as $(\pi_1, \pi_3, \ldots, \pi_{n-1})$ on the cycle $\mathcal{O}_1$ (and vice versa for $\mathcal{O}_2$). 
We also have $\gcd \left( \frac n2, k \right)= 1$. Therefore $\pi^{2k}$ has exactly one orbit when acting on $\mathcal{O}_1$ and when acting on $\mathcal{O}_2$, respectively. 
\end{proof}

\begin{lemma} \label{lem:H3}
Let $\pi, \tau \in \mathfrak{S}_n$ be two $n$-cycles. Then $[\pi, \tau] = \idop$ iff $\tau = \pi^k$ for some $k$ with $\gcd(k,n) = 1$.
\end{lemma}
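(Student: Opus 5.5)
The plan is to identify the centralizer of an $n$-cycle and read off which of its elements are again $n$-cycles. The commutator $[\pi,\tau]$ is trivial exactly when $\pi$ and $\tau$ commute, so the statement amounts to determining which $n$-cycles lie in $C_{\mathfrak{S}_n}(\pi)$.

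For the direction ``$\Leftarrow$'', assume $\tau=\pi^k$. Any two powers of $\pi$ commute, so $[\pi,\tau]=\idop$. The hypothesis $\gcd(k,n)=1$ is not needed for the commutation itself; it only guarantees, via Lemma~\ref{lem:H1}, that $\tau$ is an $n$-cycle, which is consistent with the hypotheses of the statement.

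For the direction ``$\Rightarrow$'', suppose $\pi\tau=\tau\pi$ and write $\pi=(\pi_1\ \pi_2\ \ldots\ \pi_n)$. Since $\pi$ is transitive on $\{1,\ldots,n\}$, we have $\tau(\pi_1)=\pi_{1+k}=\pi^k(\pi_1)$ for some $0\le k<n$. Commutation then propagates this along the cycle:
\[
\tau(\pi_{1+j})=\tau(\pi^j(\pi_1))=\pi^j(\tau(\pi_1))=\pi^j(\pi^k(\pi_1))=\pi^k(\pi_{1+j})
\]
for every $j$. Hence $\tau=\pi^k$. Since $\tau$ is an $n$-cycle, Lemma~\ref{lem:H1} forces $\gcd(k,n)=1$. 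The case $k=0$ is excluded automatically for $n\ge 2$, because then $\tau=\idop$ would not be an $n$-cycle.

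The only step with any substance is the observation that an element commuting with a transitive cyclic group is determined by the image of a single point. The rest is bookkeeping, so I do not expect a genuine obstacle here.
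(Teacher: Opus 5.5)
Your proof is correct and follows the same route as the paper: both reduce the claim to the fact that the centralizer of an $n$-cycle $\pi$ in $\mathfrak{S}_n$ is $\langle \pi \rangle$, and then use Lemma~\ref{lem:H1} to get $\gcd(k,n)=1$. The only difference is that the paper quotes the centralizer fact as known, whereas you prove it directly by noting that a permutation commuting with $\pi$ is determined by the image of a single point.
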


\begin{proof}
    Assume $[\pi, \tau] = \idop$, hence $\tau$ lies in the centralizer of $\pi$, which is known to be the cyclic group generated by $\pi$. Therefore $\tau = \pi^k$ for some $k$ and we deduce from Lemma \ref{lem:H1} that $\gcd(k,n) =1$. The converse is immediate.
\end{proof}

\section{Proof of the second Main Theorem} \label{sec:ProofThmB}
\noindent For the standard cycle $\sigma_n=(1\ 2\ \cdots\ n)$, we define
\[
   \mathcal{P}_0 := \mathcal{P}(\sigma_n)=\bigl\{\{1,3,5,\ldots,n-1\},\{2,4,6,\ldots,n\}\bigr\}.
\]

\begin{lemma}\label{lem:alternating}
Let $n$ be even and let $\pi=(1\ 2\ i_3\cdots i_n)$ be normalised.  Then
\[
   \mathcal{P}(\pi)=\mathcal{P}_0
\]
if and only if $\pi$ is parity stable.
\end{lemma}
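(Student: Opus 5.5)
We unwind the definition of the balanced bipartition and compare it directly with $\mathcal{P}_0$. Write $\pi=(\pi_1\ \pi_2\ \cdots\ \pi_n)$ with $\pi_1=1$, $\pi_2=2$ and $\pi_j=i_j$ for $3\le j\le n$. By definition, $R(\pi)=\{\pi_1,\pi_3,\ldots,\pi_{n-1}\}$ consists of the entries in odd positions, and $B(\pi)=\{\pi_2,\pi_4,\ldots,\pi_n\}$ consists of the entries in even positions. The proof rests on one observation: since $\pi_1=1$ is odd, $R(\pi)$ is the block of $\mathcal{P}(\pi)$ containing $1$. Likewise $\{1,3,\ldots,n-1\}$ is the block of $\mathcal{P}_0$ containing $1$. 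Two bipartitions into two blocks coincide if and only if the blocks containing a fixed element coincide. Hence
\[
\mathcal{P}(\pi)=\mathcal{P}_0 \iff R(\pi)=\{1,3,5,\ldots,n-1\}.
\]

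For the direction ``parity stable $\Rightarrow \mathcal{P}(\pi)=\mathcal{P}_0$'', assume $i_j\equiv j \pmod 2$ for all $3\le j\le n$. Together with $\pi_1=1$, this gives $\pi_j\equiv j\pmod 2$ for every $j$. Therefore every entry of $R(\pi)$ is odd. Since $R(\pi)$ has $n/2$ elements, and there are exactly $n/2$ odd numbers in $\{1,\ldots,n\}$ because $n$ is even, we get $R(\pi)=\{1,3,\ldots,n-1\}$.

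For the converse, assume $R(\pi)=\{1,3,\ldots,n-1\}$. Then every entry in an odd position is odd. The complement $B(\pi)$ must then be the set of even numbers, so every entry in an even position is even. Hence $i_j\equiv j\pmod 2$ for all $j\ge 3$, and $\pi$ is parity stable.

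There is no real obstacle here. The only point requiring care is the identification of blocks: $\mathcal{P}$ is an unordered pair, so we must fix the block containing $1$ rather than compare $R$ with $R$ blindly. This works because $\pi$ is normalised with $\pi_1=1$. The normalisation $\pi_2=2$ is not needed for this step; it only ensures that positions $1$ and $2$ are never parity defects.
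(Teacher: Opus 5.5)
Your proof is correct and takes essentially the same approach as the paper: both match the entries in odd and even positions against the odd and even numbers, using the normalisation $\pi_1=1$ to decide which block of $\mathcal{P}(\pi)$ corresponds to which block of $\mathcal{P}_0$. Your explicit treatment of the unordered pair, via the block containing $1$ together with the cardinality count $|R(\pi)|=n/2$, simply makes precise what the paper states informally.
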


\begin{proof}
The two parts of $\mathcal{P}(\pi)$ are the symbols in odd positions and the symbols in even positions in the cyclic word for $\pi$:
\[
   \{1,i_3,i_5,\ldots,i_{n-1}\}
   \quad\text{and}\quad
   \{2,i_4,i_6,\ldots,i_n\}.
\]
The standard bipartition $\mathcal{P}_0$ consists of the odd numbers and the even numbers.

Since the first position already contains $1$ (which is odd), and the second position already contains $2$ (which is even), the two bipartitions agree exactly when all remaining odd positions contain odd numbers and all remaining even positions contain even numbers. This is precisely the definition of being parity stable.
\end{proof}

\begin{theorem} \label{thm:distinct-bipartition}
Let $n\geq 6$ be even. Every nonidentity element $\rho\in\mathfrak{A}_n$ can be written as
\[
   \rho=[\pi,\tau]
\]
with $\pi$ and $\tau$ both $n$-cycles and
\[
    \mathcal{P}(\pi)\neq \mathcal{P}(\tau).
\]
Moreover, if $[\pi,\tau]=1$ for two $n$-cycles, then
\[
    \mathcal{P}(\pi)= \mathcal{P}(\tau).
\]
\end{theorem}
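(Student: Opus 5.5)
Both parts reduce to statements proved earlier: the existence part to Proposition~\ref{prop:type-A} together with Lemma~\ref{lem:alternating}, and the second part to Lemmas~\ref{lem:H3} and~\ref{lem:H2}.

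For the first part, let $\rho\in\mathfrak{A}_n$ be nonidentity and $n\geq 6$ even. By Proposition~\ref{prop:type-A} there is a parity unstable $n$-cycle $\pi'$ with $\pi'(1)=2$ and $[\sigma_n,\pi']$ in the $\mathfrak{S}_n$-conjugacy class of $\rho$. Since $\pi'$ is parity unstable, Lemma~\ref{lem:alternating} gives $\mathcal{P}(\pi')\neq\mathcal{P}_0=\mathcal{P}(\sigma_n)$.

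Next choose $g\in\mathfrak{S}_n$ with $g[\sigma_n,\pi']g^{-1}=\rho$, and set $\pi:=g\sigma_n g^{-1}$ and $\tau:=g\pi' g^{-1}$. Then $\rho=[\pi,\tau]$, and it remains to check that conjugation preserves the inequality of bipartitions. The point is that $\mathcal{P}$ depends only on the orbits of the square: for an $n$-cycle $\alpha$ with $n$ even, $\mathcal{P}(\alpha)$ is exactly the set of the two orbits of $\alpha^2$. This description is independent of where the cycle is started, so the normalisation $\pi_1=1$ does not matter. Consequently $\mathcal{P}(g\alpha g^{-1})=g\mathcal{P}(\alpha)$, and since $g$ acts bijectively on bipartitions, $\mathcal{P}(\sigma_n)\neq\mathcal{P}(\pi')$ implies $\mathcal{P}(\pi)\neq\mathcal{P}(\tau)$.

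For the second part, suppose $[\pi,\tau]=1$ for $n$-cycles $\pi,\tau$. Lemma~\ref{lem:H3} gives $\tau=\pi^k$ with $\gcd(k,n)=1$. By Lemma~\ref{lem:H2}, $\pi^2$ and $\tau^2$ have the same orbits, and by the orbit description above this means $\mathcal{P}(\pi)=\mathcal{P}(\tau)$.

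The only step needing care is the identification of $\mathcal{P}(\alpha)$ with the orbit set of $\alpha^2$, including its independence of the normalisation. This is immediate from the definition, since $\alpha^2$ sends $\alpha_i$ to $\alpha_{i+2}$ with indices taken mod $n$. Everything else is a direct application of the earlier results, so I expect no real obstacle.
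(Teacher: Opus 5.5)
Your proposal is correct and follows the paper's proof essentially step for step. The nonidentity case goes through Proposition~\ref{prop:type-A}, Lemma~\ref{lem:alternating} and simultaneous conjugation, and the commuting case goes through Lemmas~\ref{lem:H3} and~\ref{lem:H2}. Your explicit identification of $\mathcal{P}(\alpha)$ with the set of orbits of $\alpha^2$ is the fact the paper uses without stating it, both when it says conjugation carries bipartitions to bipartitions and when it concludes $\mathcal{P}(\pi)=\mathcal{P}(\tau)$ from $\pi^2$ and $\tau^2$ having the same orbits.
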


\begin{proof}
First suppose $\rho =1$. By Lemma \ref{lem:H3} we have $\tau=\pi^k$ for some integer $k$ with $\gcd(k,n)=1$.  Since $n$ is even, $k$ must be odd.  The cycles $\pi^2$ and $\tau^2=\pi^{2k}$ have the same two orbits by Lemma \ref{lem:H2}. Therefore $\mathcal{P}(\pi)= \mathcal{P}(\tau)$.

Now let $\rho\neq 1$. By simultaneous conjugation, it is enough to realize one element in the conjugacy class of $\rho$ as $[\sigma_n,\tau]$ with $ \mathcal{P}(\tau)\neq  \mathcal{P}(\sigma_n)= \mathcal{P}_0$.
 
By Proposition~\ref{prop:type-A} we can choose the second $n$-cycle
\[
   \tau=(1\ 2\ i_3\ldots i_n)
\]
to be in normalised form and parity unstable. 
By Lemma~\ref{lem:alternating}, this means $\mathcal{P}(\tau)\neq \mathcal{P}_0$.

Finally, if our original element $\rho$ is not exactly this representative but only conjugate to it, choose $g\in\mathfrak{S}_n$ with $\rho=g[\sigma_n,\tau]g^{-1}$. Then
\[
   \rho=[g\sigma_ng^{-1},g\tau g^{-1}],
\]
and conjugation carries bipartitions to bipartitions:
\[
   \mathcal{P}(g\sigma_ng^{-1})=g\mathcal{P}_0,
   \qquad
   \mathcal{P}(g\tau g^{-1})=g\mathcal{P}(\tau).
\]
Since $\mathcal{P}(\tau)\neq \mathcal{P}_0$, the conjugated bipartitions are also distinct.  This proves the theorem.
\end{proof}

\begin{remark}
The theorem excludes the identity in its existence statement for a good reason.  If $[\pi,\tau]=1$, then the two cycles commute, and the first part of the proof shows that their bipartitions must be equal.  Thus the identity case must be handled separately in type $B_n$.
\end{remark}

\begin{definition}
We say that $v \in \mathbb{F}_{2,\mathrm{even}}^n$ is \textbf{attainable by} $\mathbf{\idop}$ if there exist $n$-cycles $\pi, \tau \in \mathfrak{S}_n$ and $a,b \in \mathbb{F}_{2,\mathrm{odd}}^n$ such that $[(a, \pi), (b, \tau)] = (v, \idop)$.
\end{definition}

\begin{lemma}\label{lem:Vectors_Attainable_By_Id}
    Let $n\geq 6$ be even. The vectors attainable by $\idop$ are precisely the elements of the union of all the hyperplanes $U_{\pi}$ for $\pi$ an $n$-cycle.
\end{lemma}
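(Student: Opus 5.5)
The plan is to compute both inclusions directly from the commutator formula \eqref{equ:commutatorBn}, reducing everything to the description of $\mathrm{im}(M_\pi)$ in Lemma~\ref{lem:ImageOfMpi}. Let $\pi,\tau$ be $n$-cycles and $a,b\in\mathbb{F}_{2,\mathrm{odd}}^n$ with $[(a,\pi),(b,\tau)]=(v,\idop)$. By \eqref{equ:commutatorBn} this forces $[\pi,\tau]=\idop$ and
\[
v=(\pi^{-1}\tau^{-1})\mathbin{\cdot}\bigl(M_\tau a+M_\pi b\bigr).
\]
By Lemma~\ref{lem:H3} we get $\tau=\pi^k$ with $\gcd(k,n)=1$. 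The first part of Theorem~\ref{thm:distinct-bipartition} then gives $\mathcal{P}(\pi)=\mathcal{P}(\tau)$, hence $U_\pi=U_\tau$.

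The key step is to determine $M_\pi(\mathbb{F}_{2,\mathrm{odd}}^n)$. Write an odd vector as $a=e_1+\widetilde a$ with $\widetilde a$ even. Then
\[
M_\pi a=e_{1,\pi(1)}+M_\pi\widetilde a\in e_{1,\pi(1)}+U_\pi
\]
by Lemma~\ref{lem:ImageOfMpi}, and every element of this coset occurs. Since $\pi$ interchanges the two parts $R$ and $B$ of $\mathcal{P}(\pi)$, the points $1$ and $\pi(1)$ lie in different parts. So $e_{1,\pi(1)}$ has coordinate sum $1$ over $R$ and lies in $\mathbb{F}_{2,\mathrm{even}}^n\setminus U_\pi$. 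Thus $M_\pi(\mathbb{F}_{2,\mathrm{odd}}^n)$ is exactly the nontrivial coset of the hyperplane $U_\pi$ in $\mathbb{F}_{2,\mathrm{even}}^n$. The same holds for $\tau$, with the same hyperplane $U_\tau=U_\pi$. The sum of two elements of the nontrivial coset lies in $U_\pi$, and every element of $U_\pi$ arises this way. Hence $M_\tau a+M_\pi b$ ranges over exactly $U_\pi$.

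It remains to handle the outer permutation $\pi^{-1}\tau^{-1}=\pi^{-1-k}$. Any power of $\pi$ either fixes or swaps the two parts of $\mathcal{P}(\pi)$. Since $U_\pi$ is cut out by $\sum_{i\in R}v_i=0$, which is equivalent to $\sum_{i\in B}v_i=0$, it is invariant under powers of $\pi$. Therefore $v\in U_\pi$, which proves that every attainable vector lies in the union of the hyperplanes $U_\pi$.

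For the converse, given $v\in U_\pi$ I take $\tau=\pi$. The outer permutation $\pi^{-2}$ is a bijection of $U_\pi$, so I can pick $w\in U_\pi$ with $\pi^{-2}\mathbin{\cdot}w=v$. By the coset description above there are odd $a,b$ with $M_\pi a+M_\pi b=w$, and these give $[(a,\pi),(b,\pi)]=(v,\idop)$. The only delicate point in the whole argument is the claim that $e_{1,\pi(1)}\notin U_\pi$, i.e.\ that the two summands lie in the nontrivial coset rather than in $U_\pi$ itself. This is exactly where evenness of $n$, through the swapping of $R$ and $B$ by $\pi$, is used.
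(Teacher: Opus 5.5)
Your proof is correct and follows essentially the same route as the paper. You reduce to $\tau=\pi^k$ with $k$ odd, use that odd powers of $\pi$ swap $R$ and $B$ while $\pi^{-(k+1)}$ preserves $U_\pi$, and take $\tau=\pi$ together with Lemma~\ref{lem:ImageOfMpi} for the converse. The differences are only in packaging: your description of $M_\pi(\mathbb{F}_{2,\mathrm{odd}}^n)$ as the nontrivial coset $e_{1,\pi(1)}+U_\pi$ is the same fact as the paper's parity computation $\sum_{i\in R}(a+\pi^k\mathbin{\cdot}a)_i=1$, and in the converse the paper uses linearity instead, namely $M_\pi a+M_\pi b=M_\pi(a+b)$ with $a+b$ ranging over $\mathbb{F}_{2,\mathrm{even}}^n$.
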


\begin{proof}
    Let $U$ be the set of vectors attainable by $\idop$. Let $\pi, \tau$ be $n$-cycles with $[\pi, \tau] = \idop$. By Lemma \ref{lem:H3}, $\tau = \pi^k$ for some $k$ with $\gcd(k,n) = 1$. Then
\begin{eqnarray*}
U' &:=& \{ [(a,\pi),(b,\pi^k)]\ |\ a,b\in\mathbb{F}_{2,\mathrm{odd}}^n\}\\
& \stackrel{(\ref{equ:commutatorBn})}{=} & \left\{ (\pi^{-1}\mathbin{\cdot}a
 +(\pi^{-(k+1)})\mathbin{\cdot}b
 +(\pi^{-(k+1)})\mathbin{\cdot}a
 +\pi^{-k}\mathbin{\cdot}b, \idop )\ |\ a,b\in\mathbb{F}_{2,\mathrm{odd}}^n \right\}\\
& = & \left\{ (P_{\pi^{-k-1}}(a+\pi^k\mathbin{\cdot}a+b+\pi\mathbin{\cdot}b), \idop) \ |\ a,b\in\mathbb{F}_{2,\mathrm{odd}}^n \right\}
\end{eqnarray*}
Let $\mathcal{P}(\pi) = \{R,B\}$. By Theorem \ref{thm:distinct-bipartition} we have $\mathcal{P}(\pi) = \mathcal{P}(\pi^k)$. As $k$ is odd, $\pi$ and $\pi^k$ swap $R$ and $B$. If $a \in \mathbb{F}_{2, \mathrm{odd}}^n$, then 
\begin{align} \label{equ:attainable1}
    \sum_{i \in R} (a+\pi^k\mathbin{\cdot}a)_i = \sum_{i \in R} a_i + \sum_{i \in R} a_{\pi^{-k}(i)} = \sum_{i \in R} a_i + \sum_{i \in B} a_{i} =  1
\end{align}
and, analogously we have
\begin{align} \label{equ:attainable2}
    \sum_{i\in B}(a+\pi^k\mathbin{\cdot}a)_i =\sum_{i\in B}a_i+\sum_{i\in B}a_{\pi^{-k}(i)}=1.
\end{align}
Equations (\ref{equ:attainable1}) and (\ref{equ:attainable2}) also hold when we sum over $b+\pi\mathbin{\cdot}b$ instead of $a+\pi^k \cdot a$. In particular, we obtain
\[
 \sum_{i\in R}
 (a+\pi^k\mathbin{\cdot}a+b+\pi\mathbin{\cdot}b)_i=0,
 \qquad
 \sum_{i\in B}
 (a+\pi^k\mathbin{\cdot}a+b+\pi\mathbin{\cdot}b)_i=0
\]
Since $-(k+1)$ is even, $P_{\pi^{-(k+1)}}$ preserves $R$ and $B$. Therefore, we see that 
$$
U'_{\mathrm{vec}} := \{u\in\mathbb F_2^n\mid
 [(a,\pi),(b,\pi^k)]=(u,\idop)
 \text{ for suitable }a,b\in\mathbb F_{2,\mathrm{odd}}^n\}
 \subseteq U_{\pi}
$$
that is, each vector attainable by $\idop$ is contained in some $U_{\pi}$ for some $n$-cycle $\pi$. 

It remains to show that for each $n$-cycle $\pi$, all vectors in $U_{\pi}$ are attainable by $\idop$. Therefore let $\pi$ be an $n$-cycle and note that for $a,b \in \mathbb{F}_{2, \mathrm{odd}}^n$ we have
\[
\begin{aligned}
\relax [(a,\pi),(b,\pi)]
&=\bigl(\pi^{-1}\mathbin{\cdot}a
 +\pi^{-2}\mathbin{\cdot}b
 +\pi^{-2}\mathbin{\cdot}a
 +\pi^{-1}\mathbin{\cdot}b,\idop\bigr)\\
&=\bigl(\pi^{-2}\mathbin{\cdot}M_\pi(a+b),\idop\bigr).
\end{aligned}
\]
The vectors attainable in this way are
\begin{align*}
\{ \pi^{-1}\mathbin{\cdot}a
 +\pi^{-2}\mathbin{\cdot}b
 +\pi^{-2}\mathbin{\cdot}a
 +\pi^{-1}\mathbin{\cdot}b \mid a,b \in \mathbb{F}_{2, \mathrm{odd}}^n\} & = \{ P_{\pi^{-1}}(a+b + \pi^{-1}\mathbin{\cdot}(a+b)) \mid a,b \in \mathbb{F}_{2, \mathrm{odd}}^n\} \\
& = \{ P_{\pi^{-1}}(c+\pi^{-1}\mathbin{\cdot}c) \mid c \in \mathbb{F}_{2, \mathrm{even}}^n\} \\
& = \{ c+\pi^{-1}\mathbin{\cdot}c \mid c \in \mathbb{F}_{2, \mathrm{even}}^n\} \\
& = \textup{im}\left(\left. M_{\pi^{-1}}\right\vert_{\mathbb{F}_{2, \mathrm{even}}^n}\right)\\
& = U_{\pi^{-1}},
\end{align*}
where the last equation is given by Lemma \ref{lem:ImageOfMpi}. Since $U_{\pi^{-1}} = U_{\pi}$, we see that each vector of $U_{\pi}$ is attainable by $\idop$.
\end{proof}

\begin{lemma}\label{lem:union-UP}
Let $n$ be even, let $v\in\mathbb{F}_{2,\mathrm{even}}^n$, and let $S:=\textup{supp}(v)=\{i\mid v_i=1\}.$ Then the following holds:
    \begin{enumerate}[label=\alph*), font=\normalfont]
        \item if $S\neq\{1,\ldots,n\}$, then $v\in U_\pi$ for some $n$-cycle $\pi$;
        \item if $S=\{1,\ldots,n\}$, then $v=\mathbf{1}$ lies in some $U_\pi$ for some $n$-cycle $\pi$ if and only if\linebreak $n\equiv 0\pmod 4$.
    \end{enumerate}
\end{lemma}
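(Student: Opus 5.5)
The key observation is that membership of $v$ in $U_\pi$ depends only on the balanced bipartition $\mathcal{P}(\pi)=\{R,B\}$. Moreover, every partition of $\{1,\ldots,n\}$ into two sets of size $n/2$ arises as $\mathcal{P}(\pi)$ for some $n$-cycle $\pi$. To see this, list $R=\{r_1,\ldots,r_{n/2}\}$ with $r_1=1$ (after swapping $R$ and $B$ if necessary) and $B=\{b_1,\ldots,b_{n/2}\}$, and take the interleaved cycle $\pi=(r_1\ b_1\ r_2\ b_2\ \ldots\ r_{n/2}\ b_{n/2})$. So the statement reduces to a combinatorial question. Given $v$ with even support $S$, we ask whether there is a set $R$ of size $n/2$ with $|R\cap S|$ even. (The condition $\sum_{i\in R}v_i=0$ is exactly that $|R\cap S|$ is even, and $v\in\mathbb{F}_{2,\mathrm{even}}^n$ already holds.)

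For part a), let $s:=|S|$, which is even, with $s<n$; hence $n-s\geq 2$ is also even. We want an even number $t$ with $0\le t\le s$ and $0\le n/2-t\le n-s$. Then we let $R$ consist of $t$ elements of $S$ and $n/2-t$ elements of the complement. The admissible $t$ form the integer interval $[\max(0,s-n/2),\min(s,n/2)]$, which is nonempty. If this interval has length at least $1$, it contains an even integer. It degenerates to a single point only when $s=0$ (take $t=0$) or $s=n$, and the latter is excluded. More concretely, if $s\le n/2$ take $t=0$ or $t=s$ directly; $t=0$ works since $n-s\ge n/2$. If $s>n/2$, the interval is $[s-n/2,n/2]$, whose length is $n-s\ge 2$, so it contains an even number.

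For part b), when $S=\{1,\ldots,n\}$ we have $|R\cap S|=|R|=n/2$ for every bipartition. Hence $\mathbf{1}\in U_\pi$ if and only if $n/2$ is even, i.e.\ $n\equiv 0\pmod 4$, and this holds for any $n$-cycle $\pi$. Note also that $\mathbf{1}\in\mathbb{F}_{2,\mathrm{even}}^n$ since $n$ is even, so the statement makes sense.

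There is no real obstacle here. The only points needing care are the realisation of an arbitrary half-half bipartition by an $n$-cycle, and the elementary parity count in part a). The edge case $v=0$ is covered by $t=0$.
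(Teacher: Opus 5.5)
Your proposal is correct and follows the paper's argument: you reduce to finding a set $R$ of size $n/2$ with $|R\cap S|$ even, realise it by the interleaved cycle $(r_1\ b_1\ \cdots\ r_m\ b_m)$, and in part b) use $|R\cap S|=n/2$. Your interval of admissible $t$ in part a) repackages the paper's case split ($t=|S|$ when $|S|<n/2$, otherwise $t=n/2$ or $t=n/2-1$ according to the parity of $n/2$), and your normalisation $r_1=1$ makes explicit a point the paper leaves implicit.
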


\begin{proof}
a) First suppose that $S\neq\{1,\ldots,n\}$.  Since $v\in\mathbb{F}_{2,\mathrm{even}}^n$, the number $|S|$ is also even. 
We want to choose an $n$-cycle $\pi$ with balanced bipartition $\mathcal{P} = \{R,B\}$ such that $|R\cap S|$ is even.  Then $v\in U_{\pi}$. 
Hence, it is enough to choose a set $R$ with the above properties. 

\begin{itemize}
    \item  If $|S|<n/2$, we may choose $R$ to contain all elements of $S$, and then fill the rest of $R$ using elements from the complement $S^c$. Then $|R\cap S|=|S|$, which is even.

    \item If $|S|\geq n/2$, there are two cases.  If $n/2$ is even, choose all $n/2$ elements of $R$ from $S$.  Then $|R\cap S|=n/2$ is even.  If $n/2$ is odd, choose $n/2-1$ elements of $R$ from $S$ and one element from $S^c$.  This is possible because $S\subsetneq \{1,\ldots , n\}$.  Then $|R\cap S|=n/2-1$, which is even.
\end{itemize}
Write $R=\{r_1,\ldots,r_m\}$ and $B=R^c=\{b_1,\ldots,b_m\}$ with $m=n/2$. Then $\pi=(r_1\ b_1\ r_2\ b_2\ \cdots\ r_m\ b_m)$ is an $n$-cycle with $\mathcal P(\pi)=\{R,B\}$.

\noindent b) Now suppose $S=\{1,\ldots,n\}$.  Then $v=\mathbf{1}$.  
For every $n$-cycle $\pi$ with balanced bipartition $\mathcal{P} = \{R,B\}$ we have
\[
   |R\cap S|=|R|=n/2.
\]
Thus $\mathbf{1}\in U_{\pi}$ if and only if $n/2$ is even.  This is equivalent to $n\equiv0\pmod4$.
\end{proof}

\begin{theorem}[Coxeter commutators in $B_n$]\label{thm:type-B}
Let $n\geq 6$ and let $W=W_{B_n}$.  Then
\[
\{ [c,d] \mid c,d \in W ~\mathrm{Coxeter~elements} \}=
\begin{cases}
   W'\setminus\{-\idop\}, & n\equiv 2 \pmod 4,\\
   W', & \text{else}.\\
\end{cases}
\]
Here, $-\idop=(\mathbf 1,\idop)$.
\end{theorem}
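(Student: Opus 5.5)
We first identify $W'$. The abelianisation of $W_{B_n}$ is $(\mathbb{Z}/2)^2$, detected by the sign of the underlying permutation and the total sign sum $\sum v_i$. Hence $W'=\{(v,\rho)\mid \rho\in\mathfrak{A}_n,\ v\in\mathbb{F}_{2,\mathrm{even}}^n\}$. Equation~(\ref{equ:commutatorBn}) shows that every commutator of two Coxeter elements lies in this set, so only the reverse inclusion needs work. By the Proposition characterising Coxeter elements, these are the pairs $(a,\pi)$ with $\pi$ an $n$-cycle and $a\in\mathbb{F}_{2,\mathrm{odd}}^n$. For fixed $n$-cycles $\pi,\tau$, equation~(\ref{equ:commutatorBn}) gives
\[
[(a,\pi),(b,\tau)]=\bigl(P_{\pi^{-1}\tau^{-1}}(a+\tau\mathbin{\cdot}a+b+\pi\mathbin{\cdot}b),[\pi,\tau]\bigr).
\]
Since $P_{\pi^{-1}\tau^{-1}}$ is a bijection of $\mathbb{F}_{2,\mathrm{even}}^n$, it suffices to show that the set $\{a+\tau\mathbin{\cdot}a+b+\pi\mathbin{\cdot}b\}$ equals all of $\mathbb{F}_{2,\mathrm{even}}^n$ for a suitable pair realising the prescribed $\rho$.

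Case $n$ odd. We need $n\geq7$, since $n=6$ is even. Given $(v,\rho)\in W'$, Theorem~\ref{thm:type-A} gives $n$-cycles $\pi,\tau$ with $[\pi,\tau]=\rho$; this includes $\rho=\idop$, realised by $\pi=\tau$. Proposition~\ref{prop:BaakeYieldsVevenFornodd} then gives $a,b\in\mathbb{F}_{2,\mathrm{odd}}^n$ producing any prescribed even vector, in particular the preimage of $v$ under $P_{\pi^{-1}\tau^{-1}}$.

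Case $n$ even and $\rho\neq\idop$. Theorem~\ref{thm:distinct-bipartition} provides $n$-cycles with $[\pi,\tau]=\rho$ and $\mathcal P(\pi)\neq\mathcal P(\tau)$. Proposition~\ref{prop:BaakeYieldsVevenForneven} then yields every even vector, and we finish as in the odd case. The only point to check is the order of $\pi,\tau$ in the formula. The proposition is symmetric in the roles of $\pi$ and $\tau$, so this is harmless.

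Case $n$ even and $\rho=\idop$. This is the main point. By Lemma~\ref{lem:H3}, a commutator of Coxeter elements with trivial permutation part requires commuting $n$-cycles, so the attainable vectors are exactly those described in Lemma~\ref{lem:Vectors_Attainable_By_Id}, namely $\bigcup_\pi U_\pi$. Lemma~\ref{lem:union-UP} identifies this union: it contains every even vector except possibly $\mathbf 1$, and $\mathbf 1$ lies in it if and only if $n\equiv0\pmod4$. Thus $(v,\idop)$ is a commutator of Coxeter elements for every $v\neq\mathbf 1$, while $-\idop=(\mathbf 1,\idop)$ is one if and only if $n\equiv 0\pmod 4$. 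Combining the three cases gives exactly the dichotomy stated in Theorem~\ref{thm:type-B}.

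The step I expect to need the most care is the converse direction of the identity case: showing that $-\idop$ genuinely is not attained when $n\equiv2\pmod4$. For this I rely on the containment half of Lemma~\ref{lem:Vectors_Attainable_By_Id}, which shows that every commutator with permutation part $\idop$ has vector part in some $U_\pi$, together with Lemma~\ref{lem:union-UP}(b). I also need to confirm that $W'$ is as described, by checking that the two sign homomorphisms separate $W/W'$ and that $W'$ has index $4$.
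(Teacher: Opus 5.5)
Your proposal is correct and follows the paper's proof essentially step for step. It uses the same description $W'=\mathbb{F}_{2,\mathrm{even}}^n\rtimes\mathfrak{A}_n$ and the same three-case split, with the same tools in each case:
\begin{itemize}
\item for odd $n$, Theorem~\ref{thm:type-A} together with Proposition~\ref{prop:BaakeYieldsVevenFornodd};
\item for even $n$ and $\rho\neq\idop$, Theorem~\ref{thm:distinct-bipartition} together with Proposition~\ref{prop:BaakeYieldsVevenForneven};
\item for even $n$ and $\rho=\idop$, Lemmas~\ref{lem:Vectors_Attainable_By_Id} and~\ref{lem:union-UP}.
\end{itemize}
Your additional remarks only make explicit what the paper leaves implicit: the abelianisation argument identifying $W'$, and the fact that excluding $-\idop$ rests on the containment half of Lemma~\ref{lem:Vectors_Attainable_By_Id}.
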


\begin{proof}
It is well-known that $W'=\mathbb{F}_{2,\mathrm{even}}^n\rtimes\mathfrak{A}_n$. We let
\[
   w=(v,\rho)\in W'.
\]

\medskip
\noindent\textbf{Case 1: $n$ is odd.} 
By Theorem~\ref{thm:type-A} we can choose $n$-cycles $\pi$ and $\tau$ such that
$$
[\pi, \tau] = \rho. 
$$
Then by Proposition \ref{prop:BaakeYieldsVevenFornodd}, it follows that the set
\begin{eqnarray*}
\{[(a,\pi),(b,\tau)] \mid a,b\in\mathbb F_{2,\mathrm{odd}}^n\}
& \stackrel{(\ref{equ:commutatorBn})}{=} & 
\{(P_{\pi^{-1}\tau^{-1}} (a+\tau\mathbin{\cdot}a+b+\pi\mathbin{\cdot}b),[\pi,\tau])
 \mid a,b\in\mathbb F_{2,\mathrm{odd}}^n\}\\
&=&\{(P_{\pi^{-1}\tau^{-1}}(c),\rho) \mid c\in\mathbb F_{2,\mathrm{even}}^n\}\\
&=&\{(c,\rho)\mid u\in\mathbb F_{2,\mathrm{even}}^n\}.
\end{eqnarray*}
contains $w$, where the last equality is due to the fact that $P_{\pi^{-1}\tau^{-1}}$ bijectively preserves $F_{2,\mathrm{even}}^n$.

\medskip
\noindent\textbf{Case 2: $n$ is even and $\rho\neq1$.}
By Theorem \ref{thm:distinct-bipartition}, choose $n$-cycles $\pi$ and $\tau$ such that
\[
   [\pi,\tau]=\rho
   \qquad\text{and}\qquad
   \mathcal{P}(\pi)\neq \mathcal{P}(\tau).
\]
Then by Proposition \ref{prop:BaakeYieldsVevenForneven}, it follows that the set 
\begin{eqnarray*}
\{[(a,\pi),(b,\tau)] \mid a,b\in\mathbb F_{2,\mathrm{odd}}^n\}
& \stackrel{(\ref{equ:commutatorBn})}{=} & 
\{(P_{\pi^{-1}\tau^{-1}} (a+\tau\mathbin{\cdot}a+b+\pi\mathbin{\cdot}b),[\pi,\tau])
 \mid a,b\in\mathbb F_{2,\mathrm{odd}}^n\}\\
&=&\{(P_{\pi^{-1}\tau^{-1}}(c),\rho) \mid c\in\mathbb F_{2,\mathrm{even}}^n\}\\
&=&\{(c,\rho)\mid c\in\mathbb F_{2,\mathrm{even}}^n\}.
\end{eqnarray*}
contains $w$, where the last equality is again due to the fact that $P_{\pi^{-1}\tau^{-1}}$ bijectively preserves $F_{2,\mathrm{even}}^n$.

\medskip
\noindent\textbf{Case 3: $n$ is even and $\rho=1$.}
By Lemma \ref{lem:Vectors_Attainable_By_Id}, the vectors attainable by $\idop$ are precisely the elements of the union of all the hyperplanes $U_{\pi}$ for $\pi$ an $n$-cycle. By Lemma~\ref{lem:union-UP}~a), every vector of $\mathbb{F}_{2,\mathrm{even}}^n\setminus\{\mathbf{1}\}$ lies in some $U_{\pi}$. By Lemma~\ref{lem:union-UP}~b), the vector $\mathbf{1}$ lies in some $U_{\pi}$ precisely when $n\equiv 0~\pmod{4}$.

\medskip
\noindent Combining the three cases proves the theorem.
\end{proof}

\appendix
\section{Explicit permutations for irreducible conjugacy classes} \label{sec:appA}
For us, the requirements a permutation $\pi$ needs to satisfy are being parity unstable and having $\pi(1) = 2$. 

In the following, whenever we write $\overset{+d}{\dots}$ or $\overset{-d}{\dots}$ for some $d \in \mathbb{N}$ in a cycle of a permutation, we mean to continue the cycle by adding or subtracting $d$ consecutively until reaching the next number in the permutation. For example $(1 \ 2 \ \overset{+2}{\dots} \ 8 \ 7) = (1 \ 2 \ 4 \ 6 \ 8 \ 7).$ Whenever we write $\overset{(+d, -d')}{\dots}$ for some $d,d' \in \mathbb{N},$ we mean to continue by adding $d,$ then subtracting $d'$ and so on until reaching the next number. For example $(1 \ 2 \ \overset{(+3, -1)}{\dots} \ 9) = (1 \ 2 \ 5 \ 4 \ 7 \ 6 \ 9).$

\smallskip
\fbox{$C(n)$ for $n \geqslant 5$ odd}\\

Let $\pi := (1 \ \ 2 \ \ 4 \ \ 5 \ \ \overset{+2}{\dots} \ \ n \ \ n-1 \ \ \overset{-2}{\dots} \ \ 6 \ \ 3).$ We compute:
$$[\sigma_n, \pi] = \begin{cases}
    (1 \ 4 \ 2 \ 3 \ 5), \text{ if } n = 5,\\ \\
    (1 \ 5 \ 2 \ 3 \ 7 \ 4 \ 6), \text{ if } n = 7,\\ \\
    (1 \ \ 5 \ \ \overset{+4}{\dots} \ \ n-4 \ \ n-1 \ \ \overset{-4}{\dots} \ \ 4 \ \ 7 \ \ \overset{+4}{\dots} \ \ n-2 \ \ 2 \ \ 3 \ \ n \ \ n-3 \ \ \overset{-4}{\dots} \ \ 6), \\[1ex] \text{if } n \equiv 1 \text{ mod } 4 \text{ and } n>5,\\ \\
    (1 \ \ 5 \ \ \overset{+4}{\dots} \ \ n-2 \ \ 2 \ \ 3 \ \ n \ \ n-3 \ \ \overset{-4}{\dots} \ \ 4 \ \ 7 \ \ \overset{+4}{\dots} \ \ n-4 \ \ n-1 \ \ \overset{-4}{\dots} \ \ 6), \\[1ex] \text{if } n \equiv 3 \text{ mod } 4 \text{ and } n > 7.
\end{cases}$$

In any case, we have $[\sigma_n, \pi] \in C(n).$\\

\fbox{$C(2k, 2l)$ for $k , l \geqslant 1$ except $k = l = 1$}\\

We subdivide this case into $k = l, k = l+1$ and $k > l + 1.$

For $k = l$ (and $l \geqslant 4$), let 
$$\pi = (1 \ \ 2 \ \ 5 \ \ 3 \ \ 4 \ \ \overset{(+3,-1)}{\dots} \ \ 2l+2 \ \  2l+6 \ \ 2l+4 \ \ 2l+5 \ \ \overset{(+3,-1)}{\dots} \ \ 4l \ \ 4l-1).$$
We compute:

$$[\sigma_{4l}, \pi] = \begin{cases}
    (1 \ \ 4 \ \ \overset{+4}{\dots} \ \ 2l \ \ 2l+5 \ \ \overset{+4}{\dots} \ \ 4l-3 \ \ 4l-2 \ \ \overset{-4}{\dots} \ \ 2l+6 \ \ 2l+3 \ \ \overset{-4}{\dots} \ \ 3)\\
    (2 \ \ \overset{+4}{\dots} \ \ 2l+2 \ \ 2l+7 \ \ \overset{+4}{\dots} \ \ 4l-1 \ \ 4l \ \ \overset{-4}{\dots} \ \ 2l+4 \ \ 2l+1 \ \ \overset{-4}{\dots} \ \ 5), \\[1ex] \text{if } l \equiv 0 \text{ mod } 2,\\ \\
    (1 \ \ 4 \ \ \overset{+4}{\dots} \ \ 2l+2 \ \ 2l+7 \ \ \overset{+4}{\dots} \ \ 4l-3 \ \ 4l-2 \ \ \overset{-4}{\dots} \ \ 2l+4 \ \ 2l+1 \ \ \overset{-4}{\dots} \ \ 3)\\
    (2 \ \ \overset{+4}{\dots} \ \ 2l \ \ 2l+5 \ \ \overset{+4}{\dots} \ \ 4l-1 \ \ 4l \ \ \overset{-4}{\dots} \ \ 2l+6 \ \ 2l+3 \ \ \overset{-4}{\dots} \ \ 5), \\[1ex] \text{if } l \equiv 1 \text{ mod } 2.
\end{cases}$$

Counting the elements in the two cycles of $[\sigma_{4l}, \pi]$ shows that in both cases $[\sigma_{4l}, \pi] \in C(2l,2l) = C(2k,2l).$ For $l \in \{2, 3\},$ the respective permutations $(1 \ 2 \ 4 \ 5 \ 8 \ 6 \ 7 \ 3)$ and $(1 \ 2 \ 5 \ 3 \ 4 \ 7 \ 6 \ 9 \ 8 \ 12 \ 10 \ 11)$ give the desired results $C(4,4)$ and $C(6,6)$.

For $k = l+1$ (and $l \geqslant 4$) let 
$$\pi = (1 \ \ 2 \ \ 5 \ \ 3 \ \ 4 \ \ \overset{(+3,-1)}{\dots} \ \ 2l+2 \ \ 2l+6 \ \ 2l+4 \ \ 2l+5 \ \ \overset{(+3,-1)}{\dots} \ \ 4l \ \ 4l-1 \ \ 4l+2 \ \ 4l+1).$$
Since this permutation is obtained by the one in the $k=l$ case by adding $4l+2, 4l+1$ at the end, which doesn't change the $(+3,-1)$ pattern, a similar computation to the one before shows $[\sigma_{4l+2}, \pi] \in C(2l + 2, 2l) = C(2k, 2l).$ For $l \in \{1, 2, 3\},$ the respective permutations $(1 \ 2 \ 5 \ 4 \ 6 \ 3) ,(1 \ 2 \ 5 \ 3 \ 4 \ 7 \ 6 \ 10 \ 8 \ 9)$ and $(1 \ 2 \ 5 \ 3 \ 4 \ 7 \ 6 \ 9 \ 8 \ 12 \ 10 \ 11 \ 14 \ 13)$ give the desired results $C(4,2), C(6,4)$ and $C(8,6).$

For $k > l+1$ (and $l \geqslant 2$), let 
$$\pi = (1 \ \ 2 \ \ 5 \ \ 3 \ \ 4 \ \ \overset{(+3,-1)}{\dots} \ \ 4l-4 \ \ 4l \ \ 4l-2 \ \ 4l-1 \ \ 4l+1 \ \ 4l+4 \ \ 4l+2 \ \ 4l+3 \ \ \overset{(+3,-1)}{\dots} \ \ 2k+2l \ \ 2k+2l-1).$$
We compute:

$$[\sigma_{2k+2l}, \pi] = \begin{cases}
    (1 \ \ 4 \ \ \overset{+4}{\dots} \ \ 4l-4 \ \ 4l-2 \ \ 4l-5 \ \ \overset{-4}{\dots} \ \ 3)\\
    (2 \ \ \overset{+4}{\dots} \ \ 4l-6 \ \ 4l-1 \ \ \overset{+4}{\dots} \ \ 2k+2l-1 \ \ 2k+2l \ \ \overset{-4}{\dots} \ \ 4l+4 \\ 4l+1 \ \ \overset{+4}{\dots} \ \ 2k+2l-3 \ \ 2k+2l-2 \ \ \overset{-4}{\dots} \ \ 4l+2 \ \ 4l \ \ 4l-3 \ \ \overset{-4}{\dots} \ \ 5), \\[1ex]  \text{if } k+l \equiv 0 \text{ mod } 2, \\ \\
    (1 \ \ 4 \ \ \overset{+4}{\dots} \ \ 4l-4 \ \ 4l-2 \ \ 4l-5 \ \ \overset{-4}{\dots} \ \ 3)\\
    (2 \ \ \overset{+4}{\dots} \ \ 4l-6 \ \ 4l-1 \ \ \overset{+4}{\dots} \ \ 2k+2l-3 \ \ 2k+2l-2 \ \ \overset{-4}{\dots} \ \ 4l+4 \\ 4l+1 \ \ \overset{+4}{\dots} \ \ 2k+2l-1 \ \ 2k+2l \ \ \overset{-4}{\dots} \ \ 4l+2 \ \ 4l \ \ 4l-3 \ \ \overset{-4}{\dots} \ \ 5),\\[1ex] \text{if } k+l \equiv 1 \text{ mod } 2.
\end{cases}$$

Again counting the elements in the two cycles of $[\sigma_{2k+2l}, \pi]$ shows that in both cases $[\sigma_{2k+2l}, \pi] \in C(2k,2l).$ The remaining case is $k > l+1$ and $l=1.$ If we let $n := 2k+2,$ this corresponds to $C(n-2,2)$ for $n \geqslant 8.$ For this (if $n \geqslant 10$), let 
$$\pi = (1 \ \ 2 \ \ 4 \ \ 5 \ \ 8 \ \ 7 \ \ 10 \ \ \overset{+2}{\dots} \ \ n \ \ n-1 \ \ \overset{-2}{\dots} \ \ 9 \ \ 6 \ \ 3),$$
and compute:
$$[\sigma_n, \pi] = \begin{cases}
    (4 \ 8) \\ (1 \ \ 5 \ \ 10 \ \ \overset{+4}{\dots} \ \ n-2 \ \ 2 \ \ 3 \ \ n \ \ n-3 \ \ \overset{-4}{\dots} \ \ 9 \ \ 7 \ \ 12 \ \ \overset{+4}{\dots} \ \ n-4 \ \ n-1 \ \ \overset{-4}{\dots} \ \ 11 \ \ 6), \\[1ex] \text{if } n \equiv 0 \text{ mod } 4,\\ \\
    (4 \ 8) \\ (1 \ \ 5 \ \ 10 \ \ \overset{+4}{\dots} \ \ n-4 \ \ n-1 \ \ \overset{-4}{\dots} \ \ 9 \ \ 7 \ \ 12 \ \ \overset{+4}{\dots} \ \ n-2 \ \ 2 \ \ 3 \ \ n \ \ n-3 \ \ \overset{-4}{\dots} \ \ 11 \ \ 6), \\[1ex] \text{if } n \equiv 2 \text{ mod } 4.
\end{cases}$$

Hence, $[\sigma_n, \pi] \in C(n-2,2).$ Finally, for $n = 8$, the permutation $(1 \ 2 \ 4 \ 6 \ 8 \ 7 \ 5 \ 3)$ gives the desired conjugacy class $C(6,2).$\\

\fbox{$C(n, 3)$ for $n \geqslant 1$ odd}\\

For $n = 1,$ $\pi = (1 \ 2 \ 4 \ 3)$ and for $n = 3,$ $\pi = (1 \ 2 \ 4 \ 3 \ 6 \ 5)$ give the desired result. For $n \geqslant 5,$ let $\pi = (1 \ \ 2 \ \ 4 \ \ 5 \ \ 7 \ \ 8 \ \ \overset{+2}{\dots} \ \ n + 3 \ \ n + 2 \ \ \overset{-2}{\dots} \ \ 9 \ 6 \ 3).$ We compute:

$$[\sigma_{n+3}, \pi] =  \begin{cases}
    (1 \ 5 \ 6) \, (2 \ \ 3 \ \ n+3 \ \ n \ \ \overset{-4}{\dots} \ \ 9 \ \ 4 \ \ \overset{+4}{\dots} \ \ n-1 \ \ n+2 \ \ \overset{-4}{\dots} \ \ 7 \ \ 10 \ \ \overset{+4}{\dots} \ \ n+1) \\[1ex] \text{if } n \equiv 1 \text{ mod } 4,\\ \\
    (1 \ 5 \ 6) \, (2 \ \ 3 \ \ n+3 \ \ n \ \ \overset{-4}{\dots} \ \ 7 \ \ 10 \ \ \overset{+4}{\dots} \ \ n-1 \ \ n+2 \ \ \overset{-4}{\dots} \ \ 9 \ \ 4 \ \ \overset{+4}{\dots} n+1) \\[1ex] \text{if } n \equiv 3 \text{ mod } 4.
\end{cases}$$

In any case, we have $[\sigma_{n+3}, \pi] \in C(n,3).$\\

\fbox{$C(2k, 2l, 3)$ for $k, l \geqslant 1$}\\

Let $n := 2k+2l$ and $\pi \in C(n)$ be any of the permutations constructed in the $C(2k, 2l)$ case above such that $[\sigma_n, \pi] \in C(2k,2l).$ Then $\pi$ is of the form $(1 \ 2 \ ... \ i \ n \ ... \ j),$ where $i, j \in \{3, \ldots , n-1\}.$ Define $\tau \in C(n+5)$ by 
$$\tau := (1 \ \ 2 \ \ ... \ \ i \ \ n \ \ ... \ \ j \ \ n+1 \ \ n+3 \ \ n+2 \ \ n+5 \ \ n+4),$$

where $\tau$ is the same as $\pi$ from $1$ to $j.$ Note that whatever $\pi$ is, the commutator $[\sigma_{n+5}, \tau]$ contains the cycle $(n+1 \ \ n+4 \ \ n+5).$ For any $m \in \{1, \ldots , n\} \, \backslash \, \{i, j\}$ it is straightforward that $[\sigma_{n+5}, \tau](m) = [\sigma_{n}, \pi](m).$ Furthermore, we also have $[\sigma_{n+5}, \tau](i) = j-1 = [\sigma_{n}, \pi](i).$ For $j$ we have $[\sigma_{n}, \pi](j) = n,$ whereas the commutator $[\sigma_{n+5}, \tau]$ contains a cycle of the form $(... \ \ j \ \ n+2 \ \ n+3 \ \ n \ \ ...).$ Since $[\sigma_{n}, \pi] \in C(2k, 2l)$ and one can check that the element $n$ is contained in the $2k$ cycle of $[\sigma_{n}, \pi]$ for any permutation $\pi$ constructed above, we conclude that $[\sigma_{n+5}, \tau] \in C(2k+2, 2l, 3).$

Using the permutations for the $C(2k,2l)$ case above and the method just described, we cover every case except for $C(4, 2, 3)$ and $C(2k, 2k, 3)$ for $k \geqslant 1.$ The permutation $(1 \ 2 \ 4 \ 5 \ 8 \ 7 \ 9 \ 6 \ 3)$ gives the desired result for $C(4, 2, 3).$ For $C(2k, 2k , 3)$ (and $k \geqslant 4$) consider 
$$\pi = (1 \ \ 2 \ \ 6 \ \ 3 \ \ 4 \ \ 10 \ \ 5 \ \ 8 \ \ 9 \ \ 7 \ \ 12 \ \ \overset{(-1, +3)}{\dots} \ \ 4k-3 \ \ 4k-1 \ \ 4k+1 \ \ 4k \ \ 4k+3 \ \ 4k+2).$$  
We compute

\begin{align*}
    [\sigma_{4k+3}, \pi] = & (1 \ \ 5 \ \ 7 \ \ 13 \ \ \overset{+4}{\dots} \ \ 4k-3 \ \ 4k \ \ 4k+1 \ \ 4k-2 \ \ \overset{-4}{\dots} \ \ 10)\\
    & (2 \ \ 8 \ \ 3 \ \ 9 \ \ 4 \ \ 11 \ \ \overset{+4}{\dots} \ \ 4k-5 \ \ 4k-4 \ \ \overset{-4}{\dots} \ \ 12 \ \ 6)
    (4k-1 \ \ 4k+2 \ \ 4k+3),
\end{align*}

and see that $[\sigma_{4k+3}, \pi] \in C(2k, 2k, 3).$

Finally, the permutations $(1 \ 2 \ 4 \ 6 \ 7 \ 5 \ 3)$ and $(1 \ 2 \ 4 \ 6 \ 5 \ 3 \ 7 \ 9 \ 8 \ 11 \ 10)$ as well as $(1 \ 2 \ 6 \ 3 \ 4 \ 10 \ 5 \ 8 \ 9 \ 7 \ 11 \ 13 \ 12 \ 15 \ 14)$ give the desired result for the conjugacy classes $C(2, 2, 3), C(4, 4, 3)$ and $C(6, 6, 3)$ respectively.\\

\fbox{$C(n, 2, 2)$ for $n \geqslant 3$ odd}\\

For $n =3,$ $\pi = (1 \ 2 \ 4 \ 6 \ 7 \ 5 \ 3),$ and for $n = 5,$ $\pi = (1 \ 2 \ 4 \ 7 \ 9 \ 8 \ 5 \ 6 \ 3)$ gives the desired result. For $n \geqslant 7,$ we let $$\pi = (1 \ \ 2 \ \ 4 \ \ n+2 \ \ n+4 \ \ n+3 \ \ 5 \ \ 6 \ \ \overset{+2}{\dots} \ \ n+1 \ \ n \ \ \overset{-2}{\dots} \ \ 7 \ \ 3).$$ We compute:
$$[\sigma_{n+4}, \pi] = \begin{cases}
    (2 \ n+2) \, (4 \ n+3) \\ (1 \ \ 6 \ \ \overset{+4}{\dots} \ \ n-3 \ \ n \ \ \overset{-4}{\dots} \ \ 5 \ \ 8 \ \ \overset{+4}{\dots} \ \ n-1 \ \ 3 \ \ n+4 \ \ n+1 \ \ n-2 \ \ \overset{-4}{\dots} \ \ 7) \\[1ex] \text{if } n \equiv 1 \text{ mod } 4,\\ \\
    (2 \ n+2) \, (4 \ n+3) \\ (1 \ \ 6 \ \ \overset{+4}{\dots} \ \ n-1 \ \ 3 \ \ n+4 \ \ n+1 \ \ n-2 \ \ \overset{-4}{\dots} \ \ 5 \ \ 8 \ \ \overset{+4}{\dots} \ \ n-3 \ \ n \ \ \overset{-4}{\dots} \ \ 7) \\[1ex] \text{if } n \equiv 3 \text{ mod } 4.
\end{cases}$$

In any case, we have $[\sigma_{n+4}, \pi] \in C(n, 2, 2).$\\

\fbox{$C(2k, 2, 2, 2)$ for $k \geqslant 1$}\\

For $k = 1,$ $\pi = ( 1 \ 2 \ 5 \ 4 \ 7 \ 8 \ 6 \ 3)$ and for $k = 2,$ $\pi = (1 \ 2 \ 4 \ 8 \ 9 \ 7 \ 5 \ 6 \ 10 \ 3)$ give the desired result. For $k \geqslant 3,$ we distinguish whether $k \equiv 0 \ (\text{mod } 2)$ or $k \equiv 1 \ (\text{mod } 2).$ In the first case, let $\pi = (1 \ \ 2 \ \ 5 \ \ 4 \ \ 7 \ \ 9 \ \ 10 \ \ \overset{+2}{\dots} \ \ n-2 \ \ n-3 \ \ \overset{-2}{\dots} \ \ 11 \ \ 8 \ \ n-1 \ \ n \ \ 6 \ \ 3).$ In the second case, let $\pi = (1 \ \ 2 \ \ 5 \ \ 4 \ \ 7 \ \ \overset{+2}{\dots} \ \ n-3 \ \ n-2 \ \ \overset{-2}{\dots} \ \ 8 \ \ n-1 \ \ n \ \ 6 \ \ 3).$ Here, $n := 2k + 6.$ 

We compute:

$$[\sigma_n, \pi] = \begin{cases}
    (1 \ 5) \, (2 \ n-1) \, (3 \ n) \\ (4 \ \ 10 \ \ \overset{+4}{\dots} \ \ n-4 \ \ 7 \ \ 8 \ \ n-2 \ \ n-5 \ \ \overset{-4}{\dots} \ \ 9 \ \ 12 \ \ \overset{+4}{\dots} \ \ n-6 \ \ n-3 \ \ \overset{-4}{\dots} \ \ 11 \ \ 6) \\[1ex] \text{if } k \equiv 0 \text{ mod } 2,\\ \\
    (1 \ 5) \, (2 \ n-1) \, (3 \ n) \\ (4 \ \ 9 \ \ \overset{+4}{\dots} \ \ n-3 \ \ 7 \ \ \overset{+4}{\dots} \ \ n-5 \ \ n-4 \ \ \overset{-4}{\dots} \ \ 8 \ \ n-2 \ \ \overset{-4}{\dots} \ \ 6) \\[1ex] \text{if } k \equiv 1 \text{ mod } 2.
\end{cases}$$

In any case, we have $[\sigma_n, \pi] \in C(2k, 2, 2, 2).$

\bibliography{commutator}
\bibliographystyle{amsplain}
\end{document}